\newtheorem{theorem}{Theorem}[section]
\newtheorem{proposition}[theorem]{Proposition}
\newtheorem{lemma}[theorem]{Lemma}
\newtheorem{remark}[theorem]{Remark}
\newtheorem{definition}[theorem]{Definition}
\newcommand{\wt}{\widetilde}
\newcommand{\im}{\textrm{Im}}
\begin{document}
\baselineskip=15.5pt
\title{Degrees of maps between locally symmetric spaces}  
\author[A. Mondal]{Arghya Mondal}
\author[P. Sankaran]{Parameswaran Sankaran}
\address{The Institute of Mathematical Sciences, CIT
Campus, Taramani, Chennai 600113, India}
\email{arghya@imsc.res.in}
\email{sankaran@imsc.res.in}
\subjclass[2010]{22E40, 57R19\\
Keywords and phrases: Symmetric spaces, lattices in Lie groups, Brouver degree,  
Pontrjagin numbers.}

\date{}

\begin{abstract}   
Let $X$ be a locally symmetric space $\Gamma\backslash G/K$ where $G$ is a connected non-compact semisimple real Lie group with trivial centre, $K$ is a maximal compact subgroup of $G$, and $\Gamma\subset G$ is a torsion-free irreducible lattice in $G$.   Let $Y=\Lambda\backslash H/L$ be another 
such space having the same dimension as $X$.  Suppose that real rank of $G$ is at least $2$.   We show 
that any $f:X\to Y$ 
is either null-homotopic or is homotopic to a covering projection of degree  
an integer that depends only on $\Gamma$ and $\Lambda$.  As a corollary 
we obtain that the set $[X,Y]$ of homotopy classes of maps from $X$ to $Y$ is finite.

We obtain results on the (non-) existence of orientation reversing diffeomorphisms on 
$X$ as well as the fixed point property for $X$.
\end{abstract}
\maketitle
\section{Introduction}

Let $G$ be a connected non-compact semisimple real Lie group with trivial centre and without compact factors and let $K\subset G$ be a maximal compact 
subgroup. Then $G/K$ is a Riemannian globally symmetric space which is contractible.  If $\Gamma$ is a torsion-free lattice in $G$, then the locally symmetric space $X=\Gamma\backslash G/K$ (of non-compact type) is a manifold which is an Eilenberg-MacLane space $K(\Gamma,1)$.  
Locally symmetric spaces are fundamental objects which arise in different areas of mathematics 
such as geometry, Lie groups, representation theory, number theory as well as topology.  In this article 
we study the problem of homotopy classification of maps between two such spaces.  

Our main result is the following.   Recall that a lattice $\Gamma\subset G$ in a connected semisimple real Lie group 
is called {\it irreducible} if its image in $G/N$ under the projection $G\to G/N$ is dense for any non-compact normal subgroup $N$ of $G$.  (This definition is weaker than the one given in \cite{zimmer}; the two definitions agree when $G$ has 
no non-trivial compact factors.)  

Let $f:M\to N$ be any continuous map between two oriented connected manifolds of the same dimension $n$.
Recall that if $M$ and $N$ are compact, the $\deg(f)\in \mathbb{Z}$ is defined by $f_*(\mu_M)=\deg(f).\mu_N$ where $f_*:H_n(M;\mathbb{Z})=\mathbb{Z}\mu_M \to \mathbb{Z}\mu_N=H_n(N;\mathbb{Z})$ 
and $\mu_M,\mu_N$ are the fundamental classes of $M$ and $N$ respectively.  If $M$ and $N$ are non-compact and 
if $f$ is {\it proper}, then $\deg(f)\in \mathbb{Z}$ is defined in an analogous manner using 
compactly supported cohomology.

\begin{theorem} \label{higherrank}
Let $G, H$ be connected semisimple Lie groups with trivial centre and without compact factors and let $K,L$ be maximal compact subgroups of $G$ and $H$ respectively.   Suppose that the (real) rank of $G$ 
is at least $2$ and that $\dim G/K\ge \dim H/L$.   Let $\Gamma$ be an irreducible torsionless lattice in $G$ and let $\Lambda$ be any torsionless lattice in $H$. 
 There exists a non-negative integer $\delta=\delta(\Gamma,\Lambda)$ such that the following hold: 
Any continuous map $f:\Gamma\backslash G/K\to \Lambda\backslash H/L$ 
is either null-homotopic or is homotopic to a proper map $g$ such that $\deg(g)=\pm\delta$. 
\end{theorem}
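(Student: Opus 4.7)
The plan is to convert the problem of classifying maps $X\to Y$ into a problem about group homomorphisms $\Gamma\to\Lambda$, and then to apply Margulis' superrigidity theorem. Since $G/K$ and $H/L$ are contractible, $X$ and $Y$ are Eilenberg--MacLane spaces, and thus $[X,Y]$ is in natural bijection with the set of conjugacy classes of homomorphisms $\rho = f_*:\Gamma\to\Lambda\subset H$. It therefore suffices to analyse each such $\rho$ and to check that the induced map on locally symmetric spaces either is null-homotopic or is homotopic to a proper covering of degree $\pm\delta$.

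I would apply Margulis superrigidity to $\rho:\Gamma\to H$, which is available because $G$ has real rank $\ge 2$ and $\Gamma$ is irreducible. This yields a dichotomy: either (i) $\rho(\Gamma)$ has compact closure in $H$, or (ii) $\rho$ extends to a continuous homomorphism $\tilde\rho:G\to H$. In case (i), since $\Lambda$ is discrete and torsion-free, $\rho(\Gamma)$ is finite and torsion-free, hence trivial, so $f$ is null-homotopic. In case (ii), I would first verify that $\tilde\rho$ is injective: the identity component $N$ of $\ker\tilde\rho$ is a closed connected normal subgroup of $G$, hence (as $G$ is centre-free with no compact factors) a product of noncompact simple factors of $G$; if $N$ were nontrivial, the irreducibility of $\Gamma$ would force the projection of $\Gamma$ to $G/N$ to be dense, so that $\rho(\Gamma)$ would be a dense discrete subgroup of the nontrivial connected Lie subgroup $\tilde\rho(G)\subset H$, which is impossible. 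Since $\ker\tilde\rho$ is then discrete and normal, it is central and hence trivial.

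Conjugating inside $H$ so that $\tilde\rho(K)\subset L$, the homomorphism $\tilde\rho$ descends to a smooth map $\bar f:X\to Y$. Next, using the dimension hypothesis $\dim G/K\ge \dim H/L$ together with the structure of non-compact type symmetric spaces, I would show that $\tilde\rho(G)=H$, so that $\tilde\rho$ is a Lie group isomorphism. Identifying $G=H$ and $K=L$ via $\tilde\rho$, the inclusion $\tilde\rho(\Gamma)\subset\Lambda$ is then one of two lattices in the same group; it has finite index, and $\bar f$ is realised (up to homotopy) as a proper covering projection of degree $\pm[\Lambda:\tilde\rho(\Gamma)]$, the sign being determined by whether $\tilde\rho$ preserves or reverses orientation.

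Finally, to show the resulting index depends only on $\Gamma$ and $\Lambda$, I would invoke the invariance of Haar measure on a semisimple Lie group under continuous automorphisms (as it is fixed by the Killing form). For any two extensions $\tilde\rho_1,\tilde\rho_2$, the covolumes $\mathrm{vol}(\tilde\rho_i(\Gamma)\backslash H)$ both equal $\mathrm{vol}(\Gamma\backslash G)$ under a compatible normalisation, so $[\Lambda:\tilde\rho_i(\Gamma)] = \mathrm{vol}(\Gamma\backslash G)/\mathrm{vol}(\Lambda\backslash H)$ is independent of $i$. This common integer is the required $\delta(\Gamma,\Lambda)$, with the convention that $\delta=0$ if no extension exists. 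The main obstacle I anticipate is the surjectivity step in paragraph three: ruling out the possibility that $\tilde\rho$ embeds $G$ as a proper semisimple Lie subgroup of $H$ satisfying $\dim G/K\ge\dim H/L$ requires more than the abstract dimension inequality and demands a careful analysis of the relative positioning of the maximal compact subgroups and of the induced map on symmetric spaces, presumably using completeness of $G/K$ to upgrade a local diffeomorphism onto $H/L$ to a global one.
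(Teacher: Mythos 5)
Your overall strategy (reduce to conjugacy classes of homomorphisms $\Gamma\to\Lambda$ and apply a Margulis-type theorem) is reasonable, but it takes a different route from the paper and leaves two genuine gaps. First, the superrigidity dichotomy as you state it --- ``either $\rho(\Gamma)$ is precompact or $\rho$ extends to a continuous homomorphism $G\to H$'' --- is not the correct statement of the theorem: superrigidity requires the target to be (essentially) simple and the image to be Zariski dense, so you must first replace $H$ by the Zariski closure of $\rho(\Gamma)$, which may be a proper reductive subgroup, and work factor by factor; the extension you obtain then lands in that closure rather than in $H$, and this bookkeeping is precisely what you would need to control before the dimension count can begin. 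Second, and more seriously, the surjectivity of $\tilde\rho$ --- which you yourself flag as the main obstacle --- is exactly where the content of the theorem lies and is not supplied. Ruling out a proper semisimple embedding $G\hookrightarrow H$ compatible with $\dim G/K\ge\dim H/L$ needs the Karpelevich--Mostow theorem (the maximal compact of the image can be conjugated into $L$, yielding a totally geodesic $G/K\hookrightarrow H/L$), after which $\dim G/K\le \dim H/L$, and equality forces $\mathfrak{p}_G=\mathfrak{p}_H$, hence $\mathfrak{g}\supseteq[\mathfrak{p}_H,\mathfrak{p}_H]+\mathfrak{p}_H=\mathfrak{h}$. Without this input the argument is incomplete at its decisive step.

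The paper sidesteps both difficulties with a different pair of tools. Margulis' normal subgroup theorem gives at once that $\ker(f_*)$ is finite or $\im(f_*)$ is finite, so torsion-freeness forces $f_*$ to be injective whenever $f$ is not null-homotopic. Then Prasad's theorem on discrete subgroups isomorphic to lattices \cite{prasad76} does all the work your third paragraph struggles with: since $f_*(\Gamma)\subset H$ is a discrete subgroup isomorphic to the lattice $\Gamma\subset G$, one obtains $\dim H/L\ge\dim G/K$ for free, hence equality, and $f_*(\Gamma)$ is a lattice in $H$, hence of finite index in $\Lambda$. Mostow--Margulis rigidity then upgrades the resulting homotopy equivalence onto the intermediate cover to an isometry, and the independence of the index from $f$ (your final paragraph, which is essentially correct) is packaged in the paper as the $\mathcal{F}$-co-Hopficity of $\Lambda$ together with Lemma \ref{deltavalue}, proved by the same covolume comparison you propose. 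If you wish to keep the superrigidity route, you must add the Zariski-closure analysis and the Karpelevich--Mostow step; otherwise, substitute Prasad's theorem.
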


 It will be shown that $\delta(\Gamma,\Lambda)$ equals $[\Lambda:\Lambda']$, the index of $\Lambda'\subset \Lambda,$ where $\Lambda'$ is {\it any} finite index subgroup of $\Lambda$ isomorphic to $\Gamma$.  If there is no such subgroup, then $\delta(\Gamma,\Lambda)=0.$   
 The assertion in the above theorem that $f$ is homotopic to a covering projection if 
 it is not null-homotopic follows by a straightforward argument using the rigidity theorem and the Margulis' normal 
 subgroup theorem.  By comparing the volumes of the locally symmetric spaces it can be seen that $\delta(\Gamma,\Lambda)$ is {\it independent} of the continuous map $f$.    When $\delta>0$, both $\delta$ and $-\delta$ occur as degrees of maps as in the theorem 
 only if $G/K$ admits an orientation reversing isometry.  See Remark \ref{reverse}.
 We classify, almost completely, all globally symmetric spaces $G/K$ with $G$ simple, such that for {\it any} lattice $\Gamma\subset G$, the locally symmetric space $X=\Gamma\backslash G/K$ 
does not admit an orientation reversing isometry.  The only cases which remain unsettled are certain $G/K$ where $G$ 
is exceptional.  

 We obtain the following result as a corollary.

\begin{theorem} \label{homotopy} 
Let $X=\Gamma\backslash G/K,Y=\Lambda\backslash H/L$ where $G,H,\Gamma, \Lambda$ satisfy the hypotheses Theorem \ref{higherrank}.   Then the set $[X,Y]$ of all  (free) 
homotopy classes of maps from $X$ to $Y$ is finite. 
\end{theorem}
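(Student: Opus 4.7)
My plan is to transform the topological question into group theory via asphericity, and then combine Theorem \ref{higherrank} with Mostow--Margulis rigidity to bound everything.  Both $X=\Gamma\backslash G/K$ and $Y=\Lambda\backslash H/L$ are aspherical, being quotients of the contractible globally symmetric spaces $G/K$ and $H/L$ by free, properly discontinuous actions.  Hence each is a $K(\pi,1)$ with $\pi_1(X)=\Gamma$ and $\pi_1(Y)=\Lambda$, and by standard obstruction theory the set of free homotopy classes $[X,Y]$ is in natural bijection with $\hom(\Gamma,\Lambda)/\Lambda$, the set of $\Lambda$-conjugacy classes of group homomorphisms.  So it would suffice to show this set is finite.

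Next, I would apply Theorem \ref{higherrank} together with the remark following it: every continuous map $f:X\to Y$ is either null-homotopic or is homotopic to a finite covering projection of degree $\pm\delta=\pm\delta(\Gamma,\Lambda)$.  A covering projection of $X$ onto $Y$ induces on $\pi_1$ an injection $\phi:\Gamma\hookrightarrow\Lambda$ whose image has index $\delta$.  So, apart from the single null-homotopic class, every element of $[X,Y]$ is represented by such an injection, and the task reduces to bounding the number of $\Lambda$-conjugacy classes of injections $\Gamma\hookrightarrow\Lambda$ with image of index $\delta$.

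I would carry out this count in two layers.  Because $\Lambda$ is finitely generated (being a lattice in a semisimple Lie group), it has only finitely many subgroups of any fixed finite index $\delta$, giving finitely many possibilities for the image $\Gamma'$.  For each fixed $\Gamma'\subset\Lambda$ (necessarily isomorphic to $\Gamma$), an injection with image $\Gamma'$ is an isomorphism $\Gamma\to\Gamma'$; after fixing one such isomorphism, the set of all isomorphisms is a torsor for $\textrm{Aut}(\Gamma')$, and two are $\Lambda$-conjugate iff they differ by conjugation by some $\lambda\in N_\Lambda(\Gamma')$, whose image in $\textrm{Aut}(\Gamma')$ contains $\textrm{Inn}(\Gamma')=\Gamma'$.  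Hence the number of $\Lambda$-conjugacy classes for each $\Gamma'$ is at most $|\textrm{Out}(\Gamma')|=|\textrm{Out}(\Gamma)|$.

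The main obstacle is therefore to establish the finiteness of $\textrm{Out}(\Gamma)$.  This is a well-known consequence of Mostow--Margulis strong rigidity in real rank at least $2$: every automorphism of $\Gamma$ extends uniquely to an automorphism of $G$ (using that $\Gamma$ is Zariski dense and $G$ is centreless), yielding an injection $\textrm{Aut}(\Gamma)\hookrightarrow\textrm{Aut}(G)$.  Since $\textrm{Out}(G)$ is finite for centreless semisimple $G$, and $N_G(\Gamma)/\Gamma$ is finite for lattices in such $G$, a short diagram chase gives the finiteness of $\textrm{Out}(\Gamma)$.  Combining these three finiteness statements yields $|[X,Y]|<\infty$.
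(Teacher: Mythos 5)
Your proposal is correct and follows essentially the same route as the paper: both reduce the problem to counting the finitely many index-$\delta$ subgroups of $\Lambda$ and then bounding the homotopy classes over each such subgroup by $|Out(\Gamma)|$, whose finiteness is obtained from Mostow--Margulis rigidity via the homomorphism to $Aut(G)/G$ with kernel $N_\Gamma/\Gamma$. The only cosmetic difference is that you phrase the count through $\hom(\Gamma,\Lambda)$ modulo conjugacy, whereas the paper phrases it through covering spaces $Y_1\to Y$ and self-homotopy-equivalences of $X$.
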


Theorem \ref{higherrank} is not valid when $G$ has rank $1$.   
If $\Gamma$ is such that its abelianization is infinite, then there exist infinitely many 
non-trivial homomorphisms $\Gamma\to \Lambda$ with image an infinite cyclic group.  Using this, it is easily 
seen that there are infinitely many pairwise non-homotopic continuous maps of $\Gamma \backslash G/K$ to any irreducible locally symmetric space.   See \S\ref{firstbetti}.   However, we observe, in \S\ref{self}, that any {\it self-map} of a rank-$1$ locally symmetric space (of non-compact type) having non-zero 
degree is a homotopy equivalence and hence that, up to homotopy, there are only {\it finitely} many such maps provided 
$\dim G/K\ge 3$.  

Theorem \ref{homotopy} should be contrasted with the situation in the case of irreducible hermitian symmetric spaces 
of {\it compact type}.  It is known that there are continuous self-maps of the complex Grassmann manifold $\mathbb{C}G_{n,k}$  
having arbitrarily large degree.  (Recall that $\mathbb{C}G_{n,k}=SU(n)/S(U(k)\times U(n-k))$ is the space of all $k$-dimensional 
$\mathbb{C}$-vector subspaces of $\mathbb{C}^n$.) 
See \cite[Example 2.2, p. 208]{friedlander}.  

The paper is organised as follows: In \S\ref{index} we introduce 
the notions of $\mathcal{F}$-co-Hopficity and minimal index.  The main theorems are proved in \S\ref{proof}.  
In \S\ref{orient} we consider the problem of classifying locally symmetric spaces which admit an orientation 
reversing isometry.  
In \S\ref{rankone} we consider the 
case of rank-$1$ locally symmetric spaces.  Most of our results in this section hold for manifolds with 
negative sectional curvature.

{\it Throughout this paper, by a locally symmetric space, we mean a locally symmetric space of {\em non-compact} type.}


\section{$\mathcal{F}$-Co-Hopficity and minimal index}\label{index}
Recall that a group $\Gamma$ is said to be {\it residually finite} if, given any $\gamma\in \Gamma$, $\gamma\ne 1$, there exists a 
finite index subgroup $\Lambda$ such that $\gamma\notin \Lambda$.   A group $\Gamma$ is called {\it Hopfian} (resp. {\it co-Hopfian}) if any surjective (resp. injective) homomorphism $\Gamma\to \Gamma$ is an automorphism.  
Any finitely generated subgroup of a general linear group over a field is residually finite, and, any finitely generated residually finite group is Hopfian.  The latter result is due to Mal'cev. See \cite{ls}.  In particular, any lattice in a connected semisimple linear Lie group $G$, being finitely generated, is residually finite and hence Hopfian.   
A deep result of Sela \cite{sela} says that any torsion-free hyperbolic group is Hopfian.  In particular, the fundamental 
group of a compact negativey curved manifold is Hopfian.   
   
\begin{lemma}  \label{finiteindex}
Let $\Gamma$ be an infinite torsion-free group.\\
(i) Suppose that any non-trivial normal subgroup of $\Gamma$ has finite index 
in $\Gamma$. Let $\phi:\Gamma \to \Lambda$ be any surjective homomorphism where $\Lambda$ is infinite.
Then $\phi$ is an isomorphism. If $\Gamma$ is also co-Hopfian, then any non-trivial endomorphism of $\Gamma$  
is an isomorphism.  \\
(ii) (Cf. Hirshon \cite{hirshon}) Let $\phi:\Gamma\to \Gamma$ be an endomorphism where $\im(\phi)\subset \Gamma$ has finite index in $\Gamma$.  
If $\Gamma$ is finitely generated, residually finite and co-Hopfian, then $\phi$ is an automorphism.
\end{lemma}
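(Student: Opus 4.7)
The argument is direct. Given a surjection $\phi:\Gamma\to\Lambda$ with $\Lambda$ infinite, the quotient $\Gamma/\ker\phi\cong\Lambda$ is infinite, so $\ker\phi$ is a normal subgroup of infinite index; the hypothesis that every nontrivial normal subgroup of $\Gamma$ has finite index then forces $\ker\phi=\{1\}$, making $\phi$ an isomorphism. For the second assertion, let $\phi:\Gamma\to\Gamma$ be nontrivial. If $\ker\phi\neq\{1\}$, the hypothesis makes $\ker\phi$ of finite index, so $\phi(\Gamma)\cong\Gamma/\ker\phi$ is a finite subgroup of the torsion-free group $\Gamma$, hence trivial, contradicting nontriviality of $\phi$. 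Therefore $\phi$ is injective, and co-Hopficity upgrades it to an automorphism.

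\textbf{Part (ii).} I would follow Hirshon. Let $n=[\Gamma:\phi(\Gamma)]$. The first step is to bound $[\Gamma:\phi^{k}(\Gamma)]$ inductively. If $g_{1},\dots,g_{n}$ represent the cosets of $\phi(\Gamma)$ in $\Gamma$, then for any $\phi(x)\in\phi(\Gamma)$ we can write $x=g_{i}\phi(y)$ to obtain $\phi(x)\in\phi(g_{i})\phi^{2}(\Gamma)$; hence $[\phi(\Gamma):\phi^{2}(\Gamma)]\leq n$, and by induction $[\Gamma:\phi^{k}(\Gamma)]\leq n^{k}<\infty$ for every $k$.

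The crux is to show $\ker\phi=\{1\}$. Suppose $g\in\ker\phi\setminus\{1\}$. By residual finiteness choose a normal subgroup $N\trianglelefteq\Gamma$ of finite index $m$ with $g\notin N$; since $\Gamma$ is finitely generated, only finitely many subgroups of $\Gamma$ have index $\leq m$, so $\Gamma_{m}:=\bigcap_{[\Gamma:H]\leq m}H$ is a characteristic finite-index subgroup still missing $g$. For any such $H$, the coset map $x\phi^{-1}(H)\mapsto\phi(x)H$ is well-defined and injective, giving $[\Gamma:\phi^{-1}(H)]\leq[\Gamma:H]\leq m$, and hence $\phi^{-1}(H)\supseteq\Gamma_{m}$; intersecting yields $\phi^{-1}(\Gamma_{m})\supseteq\Gamma_{m}$. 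The resulting ascending chain $\Gamma_{m}\subseteq\phi^{-1}(\Gamma_{m})\subseteq\phi^{-2}(\Gamma_{m})\subseteq\cdots$ lies among the finitely many subgroups of $\Gamma$ of index $\leq[\Gamma:\Gamma_{m}]$, and so stabilizes at some $\tilde\Gamma$ with $\phi^{-1}(\tilde\Gamma)=\tilde\Gamma$. Exploiting this stabilization together with Malcev's theorem (f.g.\ residually finite implies Hopfian) produces the required contradiction---the essence of Hirshon's argument---forcing $\ker\phi=\{1\}$. Once $\phi$ is injective, co-Hopficity of $\Gamma$ promotes it to an automorphism.

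\textbf{Main obstacle.} The injectivity step in (ii) is the hard part: unlike in (i) we lack the dichotomy ``nontrivial normal $\Rightarrow$ finite index,'' so a nontrivial kernel does not automatically yield a finite image. Finite generation and residual finiteness must be combined with the finite-index tower $\{\phi^{k}(\Gamma)\}$---neither ingredient alone suffices---which is precisely the content of Hirshon's theorem. The iteration in the first step and the final co-Hopficity application are both routine.
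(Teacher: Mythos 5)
Part (i) is correct and is essentially the paper's own argument: the kernel has infinite index, hence is trivial; for the second assertion both you and the paper observe that a non-trivial image in a torsion-free group is infinite and then invoke co-Hopficity. For part (ii) the logical skeleton also matches the paper exactly, because the paper's entire proof of (ii) is a citation of Hirshon's Corollary~3 for the monomorphism statement followed by the one-line application of co-Hopficity; your final two sentences do the same.

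The one point to flag is your attempted in-line reconstruction of Hirshon's argument, which does not close as written. The preliminary steps are fine: $[\Gamma:\phi^{-1}(H)]\le[\Gamma:H]$, hence $\phi^{-1}(\Gamma_m)\supseteq\Gamma_m$, hence the ascending chain $\Gamma_m\subseteq\phi^{-1}(\Gamma_m)\subseteq\cdots$ stabilizes at some finite-index $\tilde\Gamma$ with $\phi^{-1}(\tilde\Gamma)=\tilde\Gamma$. But this stabilization yields no contradiction by itself: if the chain is not already constant at the zeroth step, then $g\in\ker\phi\subseteq\phi^{-1}(\Gamma_m)\subseteq\tilde\Gamma$, so the element you are trying to separate sits inside $\tilde\Gamma$ and nothing is violated. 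Moreover, Mal'cev's theorem applies to \emph{surjective} endomorphisms, whereas the restriction $\phi|_{\tilde\Gamma}:\tilde\Gamma\to\tilde\Gamma$ is again only an endomorphism with finite-index image $\tilde\Gamma\cap\phi(\Gamma)$ --- i.e., you have reproduced the original problem on a smaller group rather than resolved it. Since the lemma is stated with ``Cf.\ Hirshon'' and the paper itself simply cites the reference, deferring to Hirshon is acceptable; but you should present the injectivity step as a citation rather than suggest that the sketch above completes the proof.
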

\begin{proof}
(i)  Note that $\ker(\phi)$ has infinite index in $\Gamma$ since $\Lambda$ is infinite.  
By our hypothesis on $\Gamma$, it follows that $\ker(\phi)$ is trivial and so $\phi$ is an isomorphism.  

Let $\Gamma$ be co-Hopfian.  If $\phi:\Gamma\to \Gamma$ is a non-trivial endomorphism, then $\phi(\Gamma)$ 
is infinite as $\Gamma$ is torsion-free.  It follows from what has been 
shown already that $\phi$ is a monomorphism.  Since $\Gamma$ is co-Hopfian, we must have $\phi$ is onto and 
so $\phi$ is an automorphism. 

(ii)  This is essentially due to Hirshon \cite[Corollary 3]{hirshon} who showed, without the hypothesis of co-Hopficity property, that $\phi$ is a monomorphism.  The co-Hopficity of $\Gamma$ implies that $\phi$ is an automorphism.    
\end{proof}


\begin{definition}
Let $\Gamma, \Lambda$ be any two infinite groups.   Let $\delta(\Gamma,\Lambda)$ be defined as 
\[\delta(\Gamma,\Lambda):=\min[\Lambda: \Lambda']\] 
where the infimum is taken over all finite index subgroups $\Lambda'$ of $\Lambda$ which are isomorphic to 
$\Gamma$.   If there is no such subgroup, 
we set $\delta(\Gamma,\Lambda):=0$.  We call $\delta(\Gamma,\Lambda)$ the {\em minimal index} of 
$\Gamma$ in $\Lambda$. 
\end{definition}

Let $X$ be a finite CW complex which is a $K(\Lambda,1)$-space.  Note that if $\Lambda'$ is any finite index 
subgroup of $\Lambda$, then the coving space $X'$ of $X$ corresponding to the subgroup $\Lambda'$ 
is also a finite CW complex which is a $K(\Lambda',1)$-space.  Also the Euler characteristic $\chi(\Lambda):=\chi(X)$ is non-zero if and only if $\chi(\Lambda')$ is non-zero 
and $\chi(\Lambda)[\Lambda:\Lambda']=\chi(\Lambda')$.  It follows that  
in case $\chi(\Lambda)\ne 0$, then the minimal index equals the index:   
$\delta(\Lambda',\Lambda)=[\Lambda:\Lambda']$.

\begin{definition} 
We say that  $\Lambda$ is {\em $\mathcal{F}$-co-Hopfian} if $[\Lambda:\Lambda_1]=[\Lambda:\Lambda_2]$ for any two 
finite index subgroups $\Lambda_1,\Lambda_2\subset \Lambda$ such that $\Lambda_1\cong\Lambda_2$.   
\end{definition}

The group $\mathbb{Z}$ is not $\mathcal{F}$-co-Hopfian.      
A non-abelian free group of finite rank is $\mathcal{F}$-co-Hopfian but not co-Hopfian.  
More generally, we see from the discussion preceding the above definition that 
if there exists a $K(\Lambda, 1)$-space where $X$ is a finite CW complex with 
$\chi(X)\ne 0$, then $\Lambda$ is $\mathcal{F}$-co-Hopfian.  
Also if $\Lambda$ admits no non-trivial finite quotients, then $\Lambda$ is vacuously $\mathcal{F}$-co-Hopfian.

  We recall now a natural metric on a locally symmetric space.  
Let $\Theta:G\to G$ be an involutive automorphism with fixed group a maximal compact subgroup $K$. Let $\theta:\frak{g}\to \frak{g}$ be its differential where $\frak{g}:=Lie(G)$ and let $\frak{g}=\frak{k}\oplus \frak{p}$ be the Cartan decomposition 
where $\frak{k}=\{V\in \frak{g}\mid \theta(V)=V\}=Lie(K), \frak{p}=\{V\in \frak{g}\mid \theta(V)=-V\}$. 
The Killing form of $\frak{g}$ restricted to $\frak{p}$ is positive definite.  
Since the tangent bundle of $G/K$ is obtained as $G\times_K \frak{p}\to G/K$ (where $K$ acts on $\frak{p}$ via the 
adjoint),  this yields a $G$-invariant Riemannian metric 
on $G/K$ with respect to which it is a globally symmetric space.   The canonical Riemannian metric on $\Gamma\backslash G/K$ is obtained by requiring the covering projection $G/K\to \Gamma\backslash G/K=X$ to be a local isometry.   We refer 
to volume of $X$ with respect to this metric the {\it canonical volume}.
By the strong rigidity theorem of Mostow-Margulis-Prasad (\cite{eberlin}) the canonical volume of 
$X$ is a homotopy invariant. 
We have the following observation.

\begin{lemma} \label{fcohopf}  Let $\Lambda$ be an infinite torsion-free group.  
Suppose that any one of the following holds: (i) there exists a finite $K(\Lambda,1)$ complex and 
$\chi(\Lambda)\ne 0$;
(ii) $\Lambda$ is the fundamental group of a complete Riemannian manifold $M$ of finite volume with sectional curvature in $[-k_2,-k_1]$ where $0<k_1\le k_2<\infty;$  
(iii) $\Lambda$ is an irreducible lattice in a semisimple Lie group $H$ with trivial centre and having no (non-trivial) 
compact factors.  Then $\Lambda$ is $\mathcal{F}$-co-Hopfian.
 \end{lemma}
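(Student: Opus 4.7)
The plan is to unify all three cases by producing, in each, a non-zero numerical invariant $\nu$ of the isomorphism class of $\Lambda$ that is multiplicative under passage to finite-index subgroups, in the sense that $\nu(\Lambda')=[\Lambda:\Lambda']\,\nu(\Lambda)$ for every finite-index $\Lambda'\leq\Lambda$.  Granting such a $\nu$, if $\Lambda_1,\Lambda_2\leq\Lambda$ are finite-index subgroups with $\Lambda_1\cong\Lambda_2$, then
\[
[\Lambda:\Lambda_1]\,\nu(\Lambda)=\nu(\Lambda_1)=\nu(\Lambda_2)=[\Lambda:\Lambda_2]\,\nu(\Lambda),
\]
and $\nu(\Lambda)\neq 0$ forces the two indices to coincide, which is exactly $\mathcal{F}$-co-Hopficity.

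For case (i), I would take $\nu(\Lambda):=\chi(\Lambda)$; the required multiplicativity under finite covers of a finite $K(\Lambda,1)$-complex, together with its homotopy invariance, is exactly the fact recorded in the paragraph preceding the definition of $\mathcal{F}$-co-Hopficity, and $\chi(\Lambda)\neq 0$ is assumed.  For case (ii), I would take $\nu(\Lambda):=\|M\|$, the Gromov simplicial volume of $M$.  This invariant depends only on the homotopy type of $M$ (hence only on $\Lambda$, since $M$ is aspherical), is multiplicative under finite covers, and, crucially, is positive for a complete finite-volume Riemannian manifold whose sectional curvature is pinched in $[-k_2,-k_1]$ with $0<k_1\leq k_2<\infty$; this last point is Gromov's theorem.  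Any finite-index subgroup $\Lambda'$ corresponds to a Riemannian cover $M'\to M$ inheriting the pinching constants and finite total volume, so the same invariant is available for $\Lambda'$.

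For case (iii), I would take $\nu(\Lambda):=\textrm{vol}(\Lambda\backslash H/L)$, the canonical Riemannian volume.  Multiplicativity under finite covers and positivity are immediate, while homotopy invariance under the stated hypotheses on $H$ and $\Lambda$ is precisely the content of the Mostow--Margulis--Prasad strong rigidity theorem invoked in the excerpt just before the lemma.  The one delicate sub-case is $H=\psl_2(\br)$, where strong rigidity genuinely fails, but there the Gauss--Bonnet theorem identifies $\nu(\Lambda)$ with a non-zero multiple of $\chi(\Lambda)$, reducing this possibility to case (i).  The main obstacle across the three parts is the appeal to Gromov's positivity theorem for simplicial volume under pinched negative curvature in (ii); everything else is essentially organisational.
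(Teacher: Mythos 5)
Your proposal is correct and follows essentially the same route as the paper: the Euler characteristic for (i), Gromov's simplicial volume (non-zero and finite under the pinching hypothesis) for (ii), and the canonical volume made homotopy-invariant by Mostow--Margulis--Prasad rigidity for (iii), each being multiplicative in the index of a finite cover. Your extra caveat about $H$ locally isomorphic to $\SL(2,\br)$, where strong rigidity fails but Gauss--Bonnet still pins down the covolume, is a careful touch the paper passes over silently.
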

\begin{proof} 
By the above discussion, it only remains to consider cases (ii) and (iii).  

\noindent
Case (ii).   Let $\Lambda_1, \Lambda_2\subset \Lambda$ be finite index subgroups of $\Lambda$ such that $\Lambda_1\cong \Lambda_2$.  Let $M$ be a complete Riemannian manifold with sectional curvature 
in $[-k_2,-k_1]$ with $\pi_1(M)=\Lambda$.  It is known that the simplicial volume $||M||$ of such a manifold is 
non-zero and finite in view of Thurston's inequality.   See  \cite{inoue-yano} and \cite[\S 0.3]{gromov}.  
Let $p_j:M_j\to M$ be the covering of $M$ such that $\pi_1(M_j)=:\Lambda_j, j=1,2$.   Then $||M_j||=\deg(p_j).||M||$. 
Since $\Lambda_1\cong \Lambda_2$, we have $||M_1||=||M_2||$ and so 
$[\Lambda:\Lambda_1]=\deg(p_1)=\deg(p_2)=[\Lambda:\Lambda_2]$.   

\noindent
Case (iii).     This time we use the canonical volume instead 
of simplicial volume.   As remarked already, the Mostow-Margulis-Prasad rigidity theorem 
implies that the canonical volume of a locally symmetric space is a homotopy invariant.   Thus, proceeding as in case (ii), we see that the assertion holds.
\end{proof}

\begin{remark}{\em   (1)  Suppose that $\dim G/K >2$ or that $\Gamma$ is cocompact.   Suppose that $\Gamma_1$ 
is a finite index subgroup of $\Gamma$ and that $\Gamma_2\subset \Gamma$ is isomorphic to $\Gamma_1$ 
then $\Gamma_2$ is necessarily of finite index in $\Gamma$ and by the above lemma $[\Gamma:\Gamma_1]
=[\Gamma:\Gamma_2]$.  Indeed, 
since $\Gamma_2\subset \Gamma$ is discrete and is isomorphic 
to the lattice $\Gamma_1$, the main result of \cite{prasad76} asserts that  
$\Gamma_2$ is also a lattice.   
Hence $\Gamma_2$ has finite index in $\Gamma$. \\
(2)  It is known that the simplicial volume of a compact locally symmetric space of non-compact type is positive.  See \cite{lafont-schmidt}, \cite{buch}.    Thus the argument in the proof of case (ii) applies to case (iii) as well. \\
(3) It is known that the fundamental group of a complete negatively curved manifold $M$ with finite 
volume is co-Hopfian provided $\dim (M)\ge 3$.   See \cite[\S5]{belegradek}. 
}
\end{remark}

\begin{lemma} \label{deltavalue}  Let $\phi:\Gamma\to \Lambda$ be a homomorphism such that $[\Lambda:\phi(\Gamma)]<\infty$ where $\Gamma$ is  an infinite group in which every proper normal subgroup 
has finite index in $\Gamma$ and  $\Lambda$ is an infinite torsion-free $\mathcal{F}$-co-Hopfian group.  Then  
$\delta(\Gamma,\Lambda)=[\Lambda:\phi(\Lambda)]$. 
\end{lemma}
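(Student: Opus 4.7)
The plan is to first argue that $\phi$ must be injective under the given hypotheses, and then use the $\mathcal{F}$-co-Hopficity of $\Lambda$ to conclude that every finite index subgroup of $\Lambda$ isomorphic to $\Gamma$ has the same index as $\phi(\Gamma)$.

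First I would analyse the kernel of $\phi$. Note that $\ker(\phi)\lhd \Gamma$, and $\phi(\Gamma)\cong \Gamma/\ker(\phi)$ is infinite because $\phi(\Gamma)$ has finite index in the infinite group $\Lambda$. Invoking the hypothesis on $\Gamma$ (read as: every non-trivial normal subgroup has finite index, in line with Lemma~\ref{finiteindex}(i)), if $\ker(\phi)$ were non-trivial it would have finite index in $\Gamma$, forcing the quotient $\phi(\Gamma)$ to be finite, a contradiction. Hence $\ker(\phi)=\{1\}$ and $\phi$ restricts to an isomorphism $\Gamma\xrightarrow{\cong}\phi(\Gamma)$. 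In particular $\phi(\Gamma)$ is a finite-index subgroup of $\Lambda$ isomorphic to $\Gamma$, so by definition of the minimal index
\[
\delta(\Gamma,\Lambda)\le [\Lambda:\phi(\Gamma)].
\]

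For the reverse inequality, suppose $\Lambda'\subset \Lambda$ is any finite-index subgroup with $\Lambda'\cong \Gamma$. Composing with the isomorphism $\phi:\Gamma\to\phi(\Gamma)$ gives $\Lambda'\cong\phi(\Gamma)$, and both $\Lambda'$ and $\phi(\Gamma)$ are finite-index subgroups of the $\mathcal{F}$-co-Hopfian group $\Lambda$. By definition of $\mathcal{F}$-co-Hopficity, $[\Lambda:\Lambda']=[\Lambda:\phi(\Gamma)]$. Taking the infimum over all such $\Lambda'$ yields $\delta(\Gamma,\Lambda)=[\Lambda:\phi(\Gamma)]$, which is the claimed identity (the printed ``$\phi(\Lambda)$'' being a typo for $\phi(\Gamma)$).

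There is no real obstacle here: the two inputs — the ``just-infinite'' property of $\Gamma$ that forces $\phi$ to be injective, and the $\mathcal{F}$-co-Hopficity of $\Lambda$ that makes the index depend only on the isomorphism type of a finite-index subgroup — are precisely matched to what the statement needs. The only subtlety is reading the hypothesis on $\Gamma$ in the intended sense (non-trivial normal subgroups of $\Gamma$ have finite index), since the trivial subgroup is literally a proper normal subgroup of infinite index; once that is clarified, the argument is a direct two-step bookkeeping.
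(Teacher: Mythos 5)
Your proof is correct and follows essentially the same route as the paper's: deduce injectivity of $\phi$ from the hypothesis on normal subgroups of $\Gamma$ (since $\phi(\Gamma)$ is infinite), then apply $\mathcal{F}$-co-Hopficity to see that every finite-index copy of $\Gamma$ in $\Lambda$ has index $[\Lambda:\phi(\Gamma)]$. Your reading of the hypothesis on $\Gamma$ as ``non-trivial normal subgroups have finite index'' and your identification of the typo $\phi(\Lambda)$ for $\phi(\Gamma)$ both match the paper's intent.
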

\begin{proof}  Since $\Lambda$ is torsion free, the image of $\phi$ is infinite.  Hence $\ker(\phi)$ has 
infinite index and hence, by our hypothesis on $\Gamma$, $\phi$ is a monomorphism.  
If $\Lambda_1$ is any finite index subgroup of $\Lambda$,  such that $\Lambda_1\cong \Gamma$, 
then $\Lambda_1\cong \phi(\Gamma)$.  
By the $\mathcal{F}$-co-Hopf property of $\Lambda$, we have $[\Lambda:\Lambda_1]=[\Lambda:\phi(\Gamma)]$.
The lemma is now immediate from the definition of $\delta(\Gamma,\Lambda)$.
\end{proof}

\section{Proof of the Main Theorems}\label{proof}
In this section we prove Theorems \ref{higherrank} and \ref{homotopy}.  

\noindent 
{\it Proof of Theorem \ref{higherrank}.}   
Without loss of generality we assume that $f$ preserves the base points, which 
are taken to be the identity double-cosets. Thus $\pi_1(X)=\Gamma, \pi_1(Y)=\Lambda$ (suppressing the base point in the notation). 

Suppose that $f$ is not null-homotopic. Then $f_*:\pi_1(X)=\Gamma\to \Lambda=\pi_1(Y)$ is non-trivial.  Note that 
$\Gamma$ and $\Lambda$ are torsion-free since $X$ and $Y$ are aspherical manifolds.  
Since $rank(G)\ge 2$ and $\Gamma$ is an irreducible lattice, by Margulis' normal subgroup theorem  (\cite[Chapter 8]{zimmer}) $\ker(f_*)$ is finite 
or $Im(f_*)$ is finite.  As $\Gamma$ and $\Lambda$ are torsionless and $f_*$ is non-trivial, we must have $\ker(f_*)$ is trivial.   Hence $f_*$ is an 
isomorphism of $\Gamma$ onto a subgroup of $\Lambda_1\subset \Lambda$.  Since $\Lambda_1\subset H$ is discrete, 
by  the main result of Prasad \cite{prasad76} we see that $\dim H/L\ge \dim G/K$.  Since $\dim X\ge \dim Y$ 
by hypothesis, we 
must have equality and, again by the same theorem, $\Lambda_1$ is a 
lattice in $H$.  Since $\Lambda$ is a lattice of $H$, we conclude that $\Lambda_1\subset \Lambda$ must have finite index in $\Lambda$.  It follows that $f:X\to Y$ factors as $p\circ f_1$ where $f_1:X\to Y_1$, and $p:Y_1\to Y$ is a finite 
covering projection, where $Y_1:=\Lambda_1\backslash H/L$.  Since $f_1$ induces isomorphism in fundamental groups, it is a homotopy 
equivalence as $X,Y_1$ are aspherical manifolds.  By the Mostow-Margulis rigidity theorem, it follows that $f_1$ is homotopic to an isometry $h$.  We let $g:=p\circ h$.

It remains to show that $\deg(g)=\pm\deg(p)=\pm[\Lambda:\Lambda_1]$ equals $\pm\delta(\Gamma,\Lambda)$, where the sign is positive if $f_1$ is orientation preserving and is negative otherwise.  
By Lemma \ref{fcohopf}, $\Lambda$ is $\mathcal{F}$-co-Hopfian.  Since $rank(G)\ge 2$, in view of Margulis' 
normal subgroup theorem, we see that the hypotheses of Lemma 
\ref{deltavalue} hold and we have $\delta(\Gamma,\Lambda)=[\Lambda:\Lambda_1]$.  This completes the 
proof. \hfill $\Box$

It is a well-known and classical result of Kodaira \cite[Theorem 6]{kodaira} that a compact locally Hermitian symmetric space has the structure of a 
smooth projective variety.   Any holomorphic map $f:X\to Y$ is automatically an {\it algebraic} morphism 
of varieties (by the GAGA principle).  

\begin{proposition} \label{morphism}  Let $X=\Gamma\backslash G/K, Y=\Lambda\backslash H/L$ be as in Theorem \ref{higherrank}.  Suppose that $X,Y$ are locally Hermitian symmetric spaces and that $h:X\to Y$ is a non-constant morphism of smooth projective varieties.  Then  $h$ is a covering projection.  
\end{proposition}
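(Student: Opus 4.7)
The plan is to identify $h$ with the composition of a biholomorphism and a finite étale covering of $Y$. The argument has three stages: first verify $h$ is not null-homotopic so that Theorem \ref{higherrank} applies; next lift $h$ through the finite cover of $Y$ corresponding to the image of $h_*$; finally upgrade the lifted holomorphic map to an isomorphism.

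First I would rule out $h$ being null-homotopic. If $h$ were null-homotopic, then since $Y$ is aspherical, $h$ would lift to a holomorphic map $X\to \tilde Y = H/L$. But $H/L$ is a bounded symmetric domain (Harish-Chandra realization) and hence Stein, so any holomorphic map from the compact complex manifold $X$ into it must be constant, contradicting the non-constancy of $h$.

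Applying Theorem \ref{higherrank}, $h$ is then homotopic to a finite covering projection, so $h_*\colon\Gamma\to \Lambda$ is injective with image $\Lambda_1$ of finite index in $\Lambda$. Let $p\colon Y_1:=\Lambda_1\backslash H/L\to Y$ be the corresponding holomorphic covering. The lifting criterion produces a holomorphic map $h_1\colon X\to Y_1$ with $p\circ h_1=h$, and by construction $h_{1*}\colon\Gamma\to \Lambda_1$ is an isomorphism. As $h_1$ is a holomorphic map between compact complex manifolds of the same dimension inducing an isomorphism on $\pi_1$, its topological degree is $+1$.

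The hardest step is to upgrade $h_1$ from a degree-$1$ holomorphic map to a biholomorphism. The key input is that the universal cover $\tilde X=G/K$, being a bounded symmetric domain, is itself Stein and therefore contains no positive-dimensional compact complex subvarieties. To exploit this, suppose some fibre of $h_1$ had a positive-dimensional irreducible component $F\subset X$. Since $h_1(F)$ is a point and $h_{1*}$ is an isomorphism on $\pi_1$, the map $\pi_1(F)\to \pi_1(X)=\Gamma$ is trivial; hence each connected component of the preimage of $F$ in $\tilde X$ is biholomorphic to $F$, giving the forbidden positive-dimensional compact complex subvariety of $G/K$. So $h_1$ must be finite. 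Being a finite morphism of degree $1$ onto the smooth (and hence normal) variety $Y_1$, it is an isomorphism by Zariski's main theorem. Therefore $h=p\circ h_1$ is a composition of a biholomorphism with a finite étale covering, and so is itself a finite covering projection.
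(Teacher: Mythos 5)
Your proof is correct, and in two key places it follows a genuinely different route from the paper's. For non-null-homotopy, the paper runs an induction with generic hyperplane sections to produce a subvariety $V\subset X$ with $h(V)=h(X)$ and $\dim V=\dim h(X)$, and then observes that the fundamental cycle $\eta_V$ maps to a non-zero multiple of $\eta_{h(X)}$; you instead lift a hypothetical null-homotopic $h$ to the universal cover $H/L$, a bounded symmetric domain, and kill it with the maximum principle. Your version is shorter and avoids the hyperplane-section induction entirely, at the cost of invoking the Harish-Chandra bounded realization. For the injectivity of $h_1$, the paper uses Zariski's main theorem to get connected fibres of the birational map $h_1$ and then derives a contradiction from $h_{1*}(\eta_C)=0$ for a positive-dimensional component $C$ of a fibre (using that $h_{1*}$ is an isomorphism on homology and that fundamental classes of subvarieties of a projective variety are non-zero); you instead note that such a component has trivially-induced fundamental group, hence lifts to a compact positive-dimensional analytic subset of the Stein domain $G/K$, which cannot exist, and then finish with the finite-birational-onto-normal form of Zariski's theorem. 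Both arguments are sound; yours exploits the complex-geometric structure of the universal covers where the paper's is more topological and cycle-theoretic, and the two uses of Stein-ness give your proof a pleasant unity. The only points worth stating explicitly are that the topological degree of the surjective holomorphic map $h_1$ is positive (so that ``homotopy equivalence'' really forces degree $+1$ rather than $\pm1$), and that $Y_1$, being a compact locally Hermitian symmetric space, is again projective (Kodaira), so that GAGA applies and $h_1$ is an algebraic morphism before you invoke Zariski's main theorem.
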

\begin{proof}     
Let $Z=h(X)$.    Then $Z$ is irreducible as a variety and has positive dimension.
We claim that there exists a subvariety $V\subset X$ such that $h(V)=Z$ and $\dim V=\dim Z. $ 
This is proved by induction on the dimension of $X$.     
If $\dim Z=\dim X$, then $h(X)=Z=Y$.  So assume that $\dim Z<\dim X$. 
Let $X'=X\cap \mathcal{H}\subset X$ be a generic hyperplane section with respect to an imbedding $X\subset \mathbb{P}^N$.     
 Then $h(X')=Z$.  (We need only choose the hyperplane $\mathcal{H}\subset \mathbb{P}^N$ as follows: 
 let $h(x)=z$ be a smooth point 
 of $Z$ and let $L=\ker (T_xX \to T_zZ)\subset T_xX\subset T_x\mathbb{P}^N$.  
 We choose $\mathcal{H}$ such that $T_x\mathcal{H}\subset T_x\mathbb{P}^N$ does {\it not} contain $L$.)
By induction there exists a subvariety $V\subset X'$ such that $\dim V=\dim Z>0$ and $h(V)=Z$.   

Now the fundamental cycle $\eta_V\in H_*(X)$ of $V$ maps to a non-zero multiple of 
$\eta_{Z}\in H_*(Y)$.   Since $\eta_{Z}\ne 0$ and since $\dim(Z)>0$, we conclude that $h$ is not null-homotopic.  
By Theorem \ref{higherrank},  $\deg(h)$ is non-zero and $h$ is surjective.  
Let $p:Y_1\to Y$ be the covering projection 
corresponding to the (finite index subgroup) $h_{*}(\pi_1(X))\subset \pi_1(Y)$.  Then $h$ factors as $h=p\circ h_1$
where $h_1:X\to Y_1$ is a homotopy equivalence and $p:Y_1\to Y$ a covering projection.   
Since $p$ is a local biholomorphism, $h_1:X\to Y_1$ is holomorphic and is a homotopy equivalence,  $h_{1*}:H_*(X)\to H_*(Y_1)$ is an isomorphism.  In particular $h_1$ is surjective.  
We claim that $h_1$ is one-one.   Clearly $h_1$ is birational (since it has degree $1$) and so the fibres of $h_1$ are connected by Zariski's main theorem (\cite[Corollary 11.4, Chapter III]{hartshorne}). If $h_1$ is not one-one, there would exist a point $y\in Y_1$ and a positive dimensional irreducible subvariety 
$C\subset h_1^{-1}(y)\subset  X$ which maps to the point $y$.  This implies that $h_{1*}(\eta_C)=0,$ a contradiction 
since $\eta_C\neq 0$.   Thus $h_1$ is a holomorphic bijection and hence a biholomorphism.  
\end{proof}

\begin{remark}  \label{reverse}  {\em Let $X=\Gamma\backslash G/K, Y=\Lambda\backslash H/L$ be as in 
Theorem \ref{higherrank}. 
Suppose that $f:X\to Y$ is a (proper) continuous map having degree $\delta>0$.
If $X$ admits an orientation reversing isometry $h:X\to X,$ 
then $\deg(f\circ h)=-\delta$ and so both $\delta$ and $-\delta$ occur 
as degrees of maps between $X$ and $Y$.  An analogous statement holds if $Y$ admits an orientation reversing 
isometry. On the other hand suppose that $g:X\to Y$ has degree $-\delta.$
Then replacing $f$ and $g$ up to homotopy if necessary, we may assume without loss of generality that $f,g:X\to Y$ 
are covering projections (which preserve base points).  
The isomorphisms, $\Gamma \cong f_*(\Gamma)$ and  
$\Gamma\cong g_*(\Gamma)$,  are induced by isometries $f': X\to f^*(\Gamma)\backslash H/L$ and $g':X\to g_*(\Gamma)\backslash H/L$ which lift to isometries $\wt{f}',\wt{g}':G/K\to H/L$ respectively. 
It is clear that $f'$ is orientation preserving and $g'$ is orientation reversing.  Hence $h:=\wt{g}'^{-1}\circ \wt{f}'$ is 
an orientation reversing isometry of $G/K$.}
\end{remark}

In the case when $rank(G)>rank(H)\ge 1$ we have the following result.  

\begin{theorem}
We keep the notations as in Theorem \ref{higherrank}.  
(i) If $rank(G)>rank(H)$, then any continuous map $f:X\to Y$ is null-homotopic.
(ii) Suppose that $rank(G)>1$ and that $M$ is any closed negatively curved manifold whose 
sectional curvature lies in $[-k_2,-k_1]$ for some $0<k_1<k_2$, then any continuous map $f:X\to M$ is null-homotopic. 
\end{theorem}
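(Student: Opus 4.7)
The plan is to mirror the proof of Theorem \ref{higherrank}: assuming $f$ is not null-homotopic, Margulis' normal subgroup theorem combined with the torsion-freeness of the target produces an injective homomorphism on fundamental groups, from which a contradiction is derived using the stronger hypothesis of each part.

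For part (i), after passing to the injection $f_*:\Gamma\hookrightarrow\Lambda$ exactly as in the proof of Theorem \ref{higherrank}, the image $\Lambda_1:=f_*(\Gamma)\subset H$ is a discrete subgroup abstractly isomorphic to $\Gamma$.  By the main result of Prasad \cite{prasad76}, $\dim H/L\ge \dim G/K$; combined with the hypothesis $\dim G/K\ge \dim H/L$ inherited from Theorem \ref{higherrank}, this forces equality, and by the same theorem of Prasad, $\Lambda_1$ is actually a lattice in $H$.  Since $\Gamma$ is irreducible with $rank(G)\ge 2$, the Mostow-Margulis-Prasad strong rigidity theorem applied to the abstract isomorphism $\Gamma\cong \Lambda_1$ upgrades it to a Lie group isomorphism $G\cong H$, so in particular $rank(G)=rank(H)$, contradicting the hypothesis $rank(G)>rank(H)$.

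For part (ii), the Cartan-Hadamard theorem makes $M$ aspherical and $\pi_1(M)$ torsion-free, so the same Margulis argument gives that $f_*:\Gamma\to\pi_1(M)$ is injective.  The contradiction combines two inputs.  First, every irreducible lattice $\Gamma\subset G$ with $rank(G)\ge 2$ contains a subgroup isomorphic to $\bz^2$:  in the cocompact case this follows from the Prasad-Raghunathan theorem, which produces an $\br$-regular semisimple element of $\Gamma$ whose centralizer in $\Gamma$ has rank $rank(G)$; in the non-cocompact case one uses Margulis arithmeticity and extracts $\bz^2$ from the unipotent radical of a proper $\bq$-parabolic subgroup.  Second, Preissmann's theorem asserts that every non-trivial abelian subgroup of the fundamental group of a compact manifold of strictly negative sectional curvature is infinite cyclic.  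These two facts together rule out the injectivity of $f_*$.

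The main obstacle in (i) is ensuring that $\Lambda_1$ is a \emph{lattice} in $H$ (rather than just a discrete subgroup), since this is what licenses the use of strong rigidity; the dimension equality coming from Prasad's theorem is what delivers this.  In (ii), the main work is the uniform existence of a $\bz^2$ subgroup inside every higher rank irreducible lattice, which splits naturally into the cocompact and non-cocompact cases; once this is in hand, Preissmann's theorem finishes the argument painlessly.
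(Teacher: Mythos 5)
Your part (ii) is essentially the paper's own argument. The paper quotes Prasad--Raghunathan to place a free abelian subgroup of rank $rank(G)\ge 2$ inside $\Gamma$, and then observes that $\pi_1(M)$ is a torsion-free hyperbolic group, so all of its abelian subgroups are cyclic --- which in this smooth setting is exactly Preissmann's theorem. One small simplification: the Prasad--Raghunathan result is stated for arbitrary lattices, so you do not need to split into cocompact and non-cocompact cases or invoke Margulis arithmeticity; your detour through unipotent radicals is harmless but unnecessary.

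Part (i) is where there is a genuine gap. Immediately after the statement the paper remarks that, unlike in Theorem \ref{higherrank}, the hypothesis $\dim X\ge\dim Y$ is \emph{not} assumed here. Your argument leans on precisely that inequality: you need $\dim G/K\ge\dim H/L$ together with Prasad's bound $\dim H/L\ge\dim G/K$ to force equality of dimensions, and it is only this equality that makes $\Lambda_1=f_*(\Gamma)$ a lattice and licenses the appeal to strong rigidity. Without the dimension hypothesis the argument stops at ``$\Lambda_1$ is a discrete subgroup of $H$ isomorphic to $\Gamma$'' and no contradiction is reached. (Even where the dimension hypothesis does hold, applying Mostow--Margulis rigidity to the abstract isomorphism $\Gamma\cong\Lambda_1$ requires knowing that $\Lambda_1$ is an \emph{irreducible} lattice in $H$; this can be extracted from the normal subgroup theorem for $\Gamma$, but it has to be said.) The paper's proof of (i) avoids dimensions and rigidity altogether: it argues that $\Gamma$ contains a free abelian subgroup of rank $rank(G)$ (Prasad--Raghunathan; Wolf in the cocompact case), while the maximal rank of an abelian subgroup of the lattice $\Lambda\subset H$ is $rank(H)<rank(G)$, so no monomorphism $\Gamma\to\Lambda$ can exist. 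If you want to repair your write-up in the intended generality, replace the rigidity argument in (i) by this comparison of abelian subgroup ranks.
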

Note that, unlike in Theorem \ref{higherrank}, we do not require that $\dim X \ge \dim Y$. 
\begin{proof}
We need only show that the induced map between the fundamental groups 
is trivial. 
As in the proof of Theorem \ref{higherrank}, we see that, if $f$ is not null-homotopic, then 
$f$ induces a {\it monomorphism} of fundamental groups. We shall show in each case that 
this is not possible. 

Case (i): A result of Prasad and Raghunathan \cite{prasad-raghunathan} 
says that $\Gamma$ contains a free abelian group of rank equal to the rank of $G$.  
When $\Gamma$ is cocompact, this is due to Wolf \cite{wolf}.
This is the 
maximum possible rank---denoted $rank(\Gamma)$---of any abelian subgroup of $\Gamma$.     
Thus by hypothesis $rank(\Gamma)>rank(\Lambda)$.  It follows that $f_*:\Gamma\to \Lambda$ 
is {\it not} a monomorphism. 

Case (ii): Our hypothesis on $M$ implies that 
$\pi_1(M)$ is a hyperbolic group and hence has rank $1$.  The same argument as in case (i) applies and we conclude that $f_*:\Gamma \to \pi_1(M)$ cannot be  
a monomorphism.  
\end{proof}

Next we turn to the proof of Theorem \ref{homotopy}.

\noindent
{\it Proof of Theorem \ref{homotopy}.}  
First note that there are only finitely many subgroups of $\Lambda$ having index $\delta(\Gamma,\Lambda)$. 
Corresponding to any such group $\Gamma\cong\Lambda_1\subset\Lambda$, the deck transformation group 
of the covering projection $p:Y_1\to Y$ is finite; here $Y_1$ corresponds to the subgroup $\Lambda_1$.   
Identifying $Y_1$ with $X$, the set of homotopy self-equivalences of $X$ equals the 
group $Out(\Gamma)$ of all outer automorphisms of $\Gamma=\pi_1(X)$.  It is a well-known fact that 
this latter group is finite.  For example, using the Mostow-Margulis-Prasad rigidity theorem, one has a 
natural homomorphism $Out(\Gamma)\to Aut(G)/G$  with kernel $N_\Gamma/\Gamma$ where $N_\Gamma$ denotes 
the normalizer of $\Gamma$ in $G$.  
Since $Aut(G)$ has only finitely many components, 
$Aut(G)/G$ is finite.   Also $N_\Gamma$ is a lattice in $G$ by a result of Borel \cite[Chapter V]{raghunathan1}.   Hence $Out (\Gamma)$ 
is finite.  It follows that $[X,Y]$ is finite.
\hfill $\Box$

\subsection{Fixed point property}
Recall that a topological space $X$ has the fixed point property if any continuous self map of $X$ 
has a fixed point.   Let $X$ be a compact connected smooth manifold such that its Euler characteristic $\chi(X)$ vanishes. 
Then the identity map of $X$ is homotopic to a diffeomorphism which is fixed point free.  (One need only 
consider the flow, for small values of time, of a nowhere vanishing smooth vector field on $X$.) 
Suppose that $G$ is a connected semisimple non-compact Lie group having rank at least $2$ such $Aut(G)=G$ and that $\Gamma$ is an  irreducible cocompact lattice such that $\Gamma=N_\Gamma$, the normalizer of $\Gamma$ in $G$.    
Denote by $U$ 
a maximal compact subgroup of the complexification of $G$ which contains $K$; thus $U/K$ is the compact dual of $G/K$. 
Suppose that $\chi(U/K)\ne 0$.  (This is equivalent to the requirement that $rank(U)=rank(K)$  or that $G$ admit a compact Cartan subgroup.) 
Then $X=\Gamma\backslash G/K$ has the fixed point property.  Indeed our hypothesis implies that $Out(\Gamma)$ is trivial.   
If $f:X\to X$ is a self-map, then either $f$ is null-homotopic, in which case the Lefschetz number $L(f)=1$, or, $f$ is homotopic to the identity map of $X$ in which case $L(f)=\chi(X)$.  By the Hirzebruch proportionality principle (cf. \cite{ko}), $\chi(X)$ is a non-zero multiple of $\chi(U/K)$.  Therefore, by the Lefschetz fixed point theorem $f$ has a fixed point.

If $N_\Gamma\ne \Gamma$ and if $N_\Gamma$ is torsionless, then we have a covering projection $p:X\to Y$, inducing the inclusion $\Gamma\subset N_\Gamma$ between fundamental groups, where $Y=N_\Gamma\backslash G/K$.  The deck transformation 
group $N_\Gamma/\Gamma$ being non-trivial, we see that $X$ admits a fixed point free diffeomorphism. 


\section{Existence of orientation reversing isometries}  \label{orient}
Let $G$ be a connected non-compact real semisimple  
Lie group without compact factors and let $K$ a maximal connected compact subgroup of $G$.  We will 
assume that $G$ has trivial centre but we do not assume that $G$ is of higher rank. Let $\Gamma$ be any torsionless discrete subgroup of $G$.  Then $X:=\Gamma\backslash G/K$ is an orientable manifold.  
In this section we address the questions: {\it does $X$ admit an orientation 
reversing isometry?}   The relevance of this question to degrees of maps between locally symmetric spaces with domain $X$ has already been explained Remark \ref{reverse}.
Note that if $X$ admits an orientation reversing isometry, then so does the universal cover $G/K$.    

We will assume 
that $\dim (X)\ge 3$ since the case of surfaces is well understood.   
We denote by $\wt{G}$ the group of all automorphisms of $G$.  
Then $Out(G)=\wt{G}/G$, the group of components of $\wt{G}$, is finite and the existence of an orientation 
reversing isometry on $G/K$ is equivalent to the surjectivity of the homomorphism $\omega: Out(G)\to \mathbb{Z}/2\mathbb{Z}$ whose kernel consists precisely of those components of $\wt{G}$ which consists of orientation preserving isometries.

We 
denote by $\wt{\Gamma}$ the normalizer of $\Gamma$ in $\wt{G}$.  If $\Gamma$ is a lattice in $G$, 
then from Borel's theorem we see that $\wt{\Gamma}$ is a lattice in $\wt{G}$
and $Out(\Gamma)\cong \wt{\Gamma}/\Gamma$.  
In particular $\wt{\Gamma}/\Gamma$ is a finite group.   In any case we have  
have a natural homomorphism $\eta:\wt{\Gamma}/\Gamma\to \wt{G}/G=Out(G)$.   Hence $X$ admits 
an orientation reversing isometry if and only if $Im(\eta)$ is not contained in $\ker(\omega)$, that is, 
$\wt{\Gamma}\cap C$ is non-empty for some component $C$ of $\wt{G}$ such that $\omega(C)\ne 0\in \mathbb{Z}/2\mathbb{Z}$.

Let $U/K$ be the simply connected compact dual of $G/K$. 
Suppose that $\sigma_u:U/K\to U/K$ is an isometry which we assume, without loss of generality, fixes the identity coset $o.$  As $U/Z(U)$ is covered by the identity component of the group of isometries of $U/K$,  $\sigma_u$ induces 
an automorphism, again denoted $\sigma_u,$ of the Lie algebra $Lie(U)=:\frak{u}=\frak{k}\oplus i\mathfrak{p}$ that stabilizes $\mathfrak{k}$ and hence $\frak{p}_*:=i\mathfrak{p}$ as well.   (Recall that $Lie(G)=\mathfrak{g}=\mathfrak{k}\oplus \mathfrak{p}$.)  
Let $\sigma$ be the complex linear extension of $\sigma_u$ to $\frak{u}\otimes \mathbb{C}=:\mathfrak{g}_\mathbb{C}$. 
Then $\sigma(\mathfrak{g})=\frak{g}$ and so $\sigma_0:=\sigma|_\mathfrak{g}$ is an autmorphism of $\mathfrak{g}$.  
We denote by the same symbol $\sigma_0$ the automorphism of $G/K$ induced by $\sigma_0\in Aut(\frak{g})$.
Conversely, starting with an isometry $\sigma_0$ of $G/K$ that fixes the identity coset of $G/K$, which is again denoted $o$,  we obtain an isometry of $U/K$ that fixes $o\in U/K$ (using the assumption that $U/K$ is simply connected).  

We have the natural isomorphism of tangent spaces $T_oU/K=\frak{u}/\frak{k}\cong\mathfrak{p}_*$ and $T_oG/K=\frak{g}/\frak{k}\cong \mathfrak{p}$.  
Note that $d\sigma_u:T_oU/K=\frak{p}_* \to \frak{p}_*=T_oU/K$ and $d\sigma_o:T_oG/K=\frak{p}\to \frak{p}=T_0G/K$ are  
restrictions of the same complex linear map $\sigma:\frak{g}_\mathbb{C}\to \frak{g}_\mathbb{C}.$   
Hence 
$\sigma_u$ is orientation preserving if and only if $\sigma_0$ is orientation preserving.

\begin{proposition}  \label{orduality}
Suppose that $U/K$ is the simply connected compact dual of $G/K$.  The space $G/K$ admits an orientation reversing 
isometry if and only if $U/K$ admits an orientation reversing isometry.  In particular, if $U/K$ does not admit any 
orientation reversing isometry, neither does $X=\Gamma\backslash G/K$ for any torsionless discrete subgroup  $\Gamma\subset G$. \hfill $\Box$
\end{proposition}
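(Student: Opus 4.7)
The plan is to leverage the duality construction set out immediately before the proposition. That construction already establishes a canonical correspondence between basepoint-fixing isometries of $U/K$ and basepoint-fixing isometries of $G/K$, and the concluding sentence of the preamble ensures that this correspondence preserves orientation-preservation status, via the observation that both $d\sigma_u$ on $\mathfrak{p}_\ast = i\mathfrak{p}$ and $d\sigma_0$ on $\mathfrak{p}$ are restrictions of the same $\mathbb{C}$-linear automorphism $\sigma$ of $\mathfrak{g}_\mathbb{C}$. Thus it remains to (a) handle isometries that do not a priori fix a basepoint, and (b) deduce the \emph{in particular} statement by lifting.

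For (a), suppose $\tau:U/K\to U/K$ is an orientation reversing isometry. The identity component of $\textrm{Isom}(U/K)$ acts transitively on $U/K$ and consists of orientation preserving diffeomorphisms (it is a connected Lie group acting smoothly). Precomposing $\tau$ with a suitable such element produces an orientation reversing isometry $\sigma_u$ fixing the basepoint $o$. Applying the construction of the preamble yields an isometry $\sigma_0$ of $G/K$ fixing $o$; since $d\sigma_0$ and $d\sigma_u$ are restrictions to $\mathfrak{p}$ and $i\mathfrak{p}$ of a single $\mathbb{C}$-linear map, the $\mathbb{R}$-linear isomorphism $v\mapsto iv$ conjugates them, so they have equal real determinants and $\sigma_0$ is also orientation reversing. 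The converse is symmetric, using the transitive action of $G$ on $G/K$ by orientation preserving isometries.

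For (b), suppose $\bar{h}:X\to X$ were an orientation reversing isometry. Because $G/K$ is the simply connected universal cover of $X$ and the covering map $\pi:G/K\to X$ is a local isometry, $\bar{h}$ lifts to an isometry $h$ of $G/K$. The deck transformations of $\pi$ are elements of the connected group $G$, hence act by orientation preserving diffeomorphisms on $G/K$; it follows that $h$ is orientation reversing if and only if $\bar{h}$ is. Thus an orientation reversing isometry of $X$ produces one on $G/K$ and, by the first part of the proposition, one on $U/K$, which is the contrapositive of the desired conclusion.

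I do not anticipate any substantial obstacle. The only mildly delicate step is the determinant comparison between $d\sigma_u$ and $d\sigma_0$, but this is essentially already in the preamble and reduces to the fact that a $\mathbb{C}$-linear automorphism of a complex vector space restricts to $\mathbb{R}$-linearly conjugate maps on the real and imaginary parts of any real form. The other ingredients---transitivity of $\textrm{Isom}^0(U/K)$ and of $G$, and orientation-preservation of these connected group actions---are standard.
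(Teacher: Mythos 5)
Your proposal is correct and follows essentially the same route as the paper: the proposition is proved by the discussion immediately preceding it, namely the basepoint-fixing correspondence $\sigma_u \leftrightarrow \sigma_0$ through the common $\mathbb{C}$-linear extension $\sigma$ on $\mathfrak{g}_{\mathbb{C}}$, with the determinant comparison on $\mathfrak{p}$ versus $\mathfrak{p}_*=i\mathfrak{p}$ giving the orientation statement, and the lifting remark handling $X=\Gamma\backslash G/K$. Your explicit justifications of the ``without loss of generality'' reduction and of the conjugation by $v\mapsto iv$ are exactly the details the paper leaves implicit.
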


In view of the above proposition, we need only consider simply connected symmetric spaces $U/K$ of compact type 
to decide whether $G/K$ admits an orientation reversing isometry.   
We shall assume that $G$ is simple 
and settle this question completely for all symmetric spaces $G/K$ where $G$ is of classical type or $G$ is a complex Lie group.  We shall also address a few cases where $G$ is exceptional.  

Recall that if a smooth compact manifold admits an 
orientation reversing diffeomorphism, then the manifold represents either the trivial element or an element of order $2$ in the oriented cobordism ring $\Omega_*$ and hence all its Pontrjagin numbers are zero. See \cite[p. 186]{ms}.   Suppose that 
some Pontrjagin number of $U/K$ is non-zero, then it does not admit any orientation 
reversing diffeomorphism and the same is true of $X=\Gamma \backslash G/K$ as well for any torsionless discrete subgroup $\Gamma.$ 
In the case when 
$\Gamma$ is a cocompact lattice, by the Hirzebruch proportionality principle (\cite{hirzebruch56},  \cite{ko}) the corresponding Pontrjagin number of $X$ is non-zero and $X$ represents an element of infinite (additive) order in $\Omega_*$.

\subsection{Symmetric spaces of type IV}
When $G$ is a simply connected complex simple Lie group, we have $U=K\times K$, $G/K$ is of type IV, and $U/K\cong K$ is of type II.  
Let $\phi$ be an automorphism of $\mathfrak{g}=\mathfrak{k}+i\mathfrak{k}=:\mathfrak{k}^\mathbb{C}$ which preserves $\mathfrak{k}$.  Then either $\phi$ is the 
$\mathbb{C}$-linear extension $\theta:=\psi\otimes \mathbb{C}$ of an automorphism $\psi$ of $\mathfrak{k}$ or
is of the form $\sigma_0\circ \theta$ 
where $\sigma_0$ is the complex conjugation.  Since $\sigma_0$ restricted to 
$\mathfrak{p}=i\mathfrak{k}$ equals $-id$, $G/K$ admits an orientation reversing isometry if 
$\dim_\mathbb{C}\mathfrak{g}=\dim \mathfrak{k}$ is odd.   If $\phi=\theta$, then $\phi|i\mathfrak{k}$ is orientation 
reversing if and only if $\psi$ is.   Suppose that $\mathfrak{t}\subset \mathfrak{k}$ is the Lie algebra of a maximal torus $T\subset K$.  By composing with an inner automorphism of $\mathfrak{k}$ if necessary, we may assume 
that $\psi$ stabilizes $\mathfrak{t}$.  Let $\Delta$ be the set of roots of $\mathfrak{g}$ with respect to $\mathfrak{t}^\mathbb{C}$ and 
let $\Sigma$ be the set of simple roots for a positive system of roots $\Delta^+$.  Then $\Delta$ is also the set of 
roots of $\mathfrak{k}$ with respect to $\mathfrak{t}$.  
We may further assume, by composing with an inner automorphism of $\mathfrak{g}$ representing a suitable 
element of the Weyl group of $(\mathfrak{k},\mathfrak{t})$, that $\psi$ preserves $\Delta^+$. See \cite[Theorem 3.29, Ch. X]{helgason}.   
Then $\psi$ induces an automorphism of the Dynkin diagram of $(\mathfrak{k},\mathfrak{t})$.  
{\it We claim that $\psi$ is orientation preserving if and only if it induces an even permutation of the set of 
nodes of the Dynkin diagram, namely, $\Sigma$. } 
To see this, for any complex linear form $\gamma$ on  $\mathfrak{t}^\mathbb{C}$, 
let $H_\gamma\in \mathfrak{t^\mathbb{C}}$ be the unique element $\mathfrak{t}^\mathbb{C}$ such that $B(H,H_\gamma)=\gamma(H), \forall H\in \mathfrak{t}^\mathbb{C}.$  (Here $B(.,.)$ is the Killing form.) 
Then $iH_\alpha\in\mathfrak{t}$ for all $\alpha\in \Delta.$  Let $X_\alpha, \alpha\in \Delta,$ be a Weyl basis 
(in the sense of \cite[Definition, p.421]{helgason}).  Then, for any $\beta\in \Delta^+$, 
$X_\beta-X_{-\beta}, i(X_\beta+X_{-\beta})$ form an $\mathbb{R}$-basis for the real vector space 
$\mathfrak{k}\cap (\mathfrak{g}_\beta+\mathfrak{g}_{-\beta})=:\mathfrak{k}_\beta.$  The $\mathfrak{k}_\beta,\beta\in \Delta^+,$ 
are the non-trivial irreducible $T$-submodules of $\mathfrak{k}$ and we have $\psi(\mathfrak{k}_\beta)=\mathfrak{k}_{\gamma}$ where $\gamma=\psi^*(\beta)=\beta \circ\psi\in \Delta^+$.  In fact, the assumption that $\psi(\Delta^+)=\Delta^+$ implies that the matrix of $\psi:\mathfrak{k}_\beta\to \mathfrak{k}_{\gamma}$ with respect to the ordered bases of $\mathfrak{k}_\beta, \mathfrak{k}_\gamma$ 
as above is of the form $\left( \begin{smallmatrix}
x_\beta &-y_\beta \\y_\beta & x_\beta\end{smallmatrix}\right)$ with $x_\beta^2+y_\beta^2=1$ (see \cite[\S5, Ch. IX]{helgason}).  It follows that 
the $\psi:\mathfrak{k}\to \mathfrak{k}$ is orientation preserving if and only if $\psi|\mathfrak{t}$ is orientation 
preserving.  Since the basis $iH_\alpha, \alpha\in \Sigma,$ of $\mathfrak{t}$ is permuted by $\psi$, $\psi |\mathfrak{t}$ 
is orientation preserving if and only if $\psi^*:\Sigma\to \Sigma$ is an even permutation.  This proves our claim.
An inspection of the Dynkin diagrams reveals that an orientation reversing automorphism of $\mathfrak{k}$ exists 
precisely when $\mathfrak{k} =\frak{su}(n), n\equiv 0, 3\mod 4,$ or $\frak{k}=\frak{so}(2n), n\ge 4$.  We have proved

\begin{theorem}
An irreducible globally symmetric space $G/K$ of type VI admits an orientation reversing 
isometry if and only if either $\dim_\mathbb{C}G=\dim K$ is odd, or, $K$ 
is locally isomorphic to one of the groups $SU(4n+3), n\ge 0,$ and $SO(4m), m\ge 1$. 
\end{theorem}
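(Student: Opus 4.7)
The plan is to combine the reduction already carried out in the material preceding the theorem with a case-by-case inspection of Dynkin diagrams. By what has been shown, the question reduces to determining when $\mathfrak{g} = \mathfrak{k}^{\mathbb{C}}$ admits an automorphism $\phi$ preserving $\mathfrak{k}$ such that $\phi|_{\mathfrak{p}}$, with $\mathfrak{p} = i\mathfrak{k}$, reverses orientation. Any such $\phi$ is either $\theta = \psi \otimes \mathbb{C}$ or $\sigma_0 \circ \theta$ for some $\psi \in \mathrm{Aut}(\mathfrak{k})$, where $\sigma_0$ denotes complex conjugation. Under the $\mathbb{R}$-linear identification $i\mathfrak{k} \cong \mathfrak{k}$ via $iX \mapsto X$, the map $\theta|_{i\mathfrak{k}}$ corresponds to $\psi$ while $\sigma_0 \circ \theta|_{i\mathfrak{k}}$ corresponds to $-\psi$, whose determinant is $(-1)^{\dim \mathfrak{k}} \det \psi$. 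Hence an orientation reversing $\phi$ exists iff either (a) $\dim \mathfrak{k}$ is odd (take $\phi = \sigma_0$, i.e., $\psi = \mathrm{id}$), or (b) $\mathfrak{k}$ itself admits an orientation reversing automorphism $\psi$.

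Alternative (a) is precisely the first condition in the theorem, so only (b) requires further analysis. The claim established in the paragraph preceding the theorem tells me that $\psi$ is orientation reversing iff the induced permutation $\psi^{\ast}: \Sigma \to \Sigma$ of simple roots is odd. Since inner automorphisms and those fixing $\Sigma$ pointwise are orientation preserving, the existence of such a $\psi$ is equivalent to the outer (diagram) automorphism group of $\mathfrak{k}$ containing an odd permutation of the nodes. I would then enumerate the simple compact Dynkin diagrams. For $A_{n-1}$ (so $\mathfrak{k} = \mathfrak{su}(n)$, $n \ge 3$) the nontrivial outer symmetry is the full reflection, a product of $\lfloor (n-1)/2 \rfloor$ transpositions, which is odd iff $n \equiv 0, 3 \pmod 4$. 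For $D_n$ with $n \ge 5$ the outer symmetry is a single transposition swapping the two fork nodes, hence odd; for $D_4$ the outer group $S_3$ also contains transpositions. For $B_n, C_n, E_7, E_8, F_4, G_2$ the diagram has no nontrivial symmetry. For $E_6$ the nontrivial symmetry fixes two nodes and interchanges the other two pairs, hence is even. Thus (b) holds precisely when $\mathfrak{k} \cong \mathfrak{su}(n)$ with $n \equiv 0, 3 \pmod 4$, or $\mathfrak{k} \cong \mathfrak{so}(2n)$ with $n \ge 4$.

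Finally I would consolidate (a) and (b) to avoid double-counting. For $\mathfrak{su}(n)$ one has $\dim = n^2 - 1$, odd exactly when $n$ is even, so the subfamily $n \equiv 0 \pmod 4$ is already in (a), leaving $n \equiv 3 \pmod 4$, i.e.\ $K$ locally isomorphic to $SU(4n+3)$. For $\mathfrak{so}(2n)$ one has $\dim = n(2n-1)$, odd exactly when $n$ is odd, so only $n$ even contributes beyond (a), giving $K$ locally isomorphic to $SO(4m)$. The exceptional algebras contribute nothing new in (b), while $E_7$ (dimension $133$) is already covered by (a). The main obstacle is essentially bookkeeping: the parity of the type $A$ reflection must be computed carefully, since it is not simply the parity of the number of nodes; and one must check that in the $\mathfrak{so}(2n)$ case the triality in $D_4$ does not slip in any extra example beyond the ones produced by the classical transposition.
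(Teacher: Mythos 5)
Your proposal is correct and follows essentially the same route as the paper: the same dichotomy $\phi=\theta=\psi\otimes\mathbb{C}$ versus $\sigma_0\circ\theta$, the same criterion that $\psi$ reverses orientation iff the induced permutation of the Dynkin diagram nodes is odd, and the same inspection of diagrams yielding $\mathfrak{su}(n)$ with $n\equiv 0,3\bmod 4$ and $\mathfrak{so}(2n)$, $n\ge 4$. Your explicit determinant formula $(-1)^{\dim\mathfrak{k}}\det\psi$ for the $\sigma_0\circ\theta$ case and the final consolidation of the two alternatives only make explicit what the paper leaves implicit; the bookkeeping (including the parity of the type $A$ reflection) is carried out correctly.
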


From now on, till the end of \S4, we assume that $G$ is a connected real simple Lie group which is not a complex Lie group. Thus $G/K$ is 
of type III and $U$ is simple.  The results are tabulated in \S \ref{rptable}.

\subsection {Hermitian symmetric spaces} \label{hermitian} 
Let $U/K$ be simply connected compact irreducible Hermitian symmetric space $U/K$, where $U$ is simply connected and simple.   
There are six families of such spaces: AIII, DIII, BDI(rank $2$), CI, EIII, and EVII, using the standard notations (as in 
 \cite{helgason}).

There are exactly two invariant complex structures $\mathcal{C}$ and $\mathcal{C}'$ which are 
conjugate to one another in the sense that the complex structure on $\mathfrak{p}_*=T_oU/K$ induced by $\mathcal{C}$ and $\mathcal{C}'$ are complex conjugate of each other. 
These two complex structures are related by an automorphism $\sigma_u$ of $U$ that stabilizes $K$ thereby inducing 
the complex conjugation on $\mathfrak{p}_*$ (with respect to $\mathcal{C}$, say). 
In particular, the isometry $\sigma_u$ of $U/K$ is orientation reversing 
if and only if $\dim_\mathbb{C} U/K$ is odd.  See  \cite[Remark 2, \S 13]{bh}.   In particular, 
$E_7/U(1)\times E_6$, which has dimension $27$ (over $\mathbb{C}$), admits an orientation 
reversing isometry.
 
When $\dim_\mathbb{C}U/K=d$ is even, we shall show that 
an appropriate Pontrjagin number of $U/K$ is non-zero.   

The signature of a compact irreducible Hermitian symmetric space are known; see \cite[p. 163]{hirzebruch}.  
In the case of the complex Grassmann manifold (Type AIII), $\mathbb{C}G_{p+q,p}=SU(p+q)/S(U(p)\times U(q))$, the signature equals ${\lfloor (p+q)/2\rfloor}\choose {\lfloor p/2\rfloor}$  
when $d=pq$ is even.  (Cf. Shanahan \cite[p. 489]{shanahan}.)  The signature equals $4\lfloor p/2\rfloor$ for the complex quadric (Type BD I, rank $2$),  $SO(2+p)/(SO(2)\times SO(p)), p>2$.   
In the case of  type E III, namely $E_6/(Spin(10)\times U(1))$, the signature equals $3$.  
We shall presently show that when $\dim_\mathbb{C} SO(2p)/U(p)=p(p-1)/2=d$ (resp. $\dim_\mathbb{C} Sp(n)/U(n)=n(n+1)/2=d$) is even, $p_1^{d/2}\ne 0$ where $p_1$ is the first Pontrjagin class. 

We shall now compute $p_1:=p_1(SO(2p)/U(p))$.  For this purpose we shall use the notation and the formula for the total Chern class of $SO(2p)/U(p)$ given in \cite[\S 16.3]{bh} with respect to an $SO(2p)$-invariant complex structure 
compatible with the usual differentiable structure on $SO(2p)/U(p)$.  Let $\sigma_j=\sigma_j(x_1,\ldots, x_p)$ denote the $j$th symmetric polynomial in the indeterminates $x_1,\ldots, x_p$.  We set $\deg(x_j)=2$ and consider the graded polynomial algebra $\mathbb{K}[\sigma_1,\ldots, \sigma_p]$.  
If $\mathbb{K}$ is any field of characteristic other than $2$, the cohomology algebra $H^*(SO(2p)/U(p);\mathbb{K})$ is isomorphic to $\mathbb{K}[\sigma_1,\ldots,\sigma_p]/I$ where $I$ is the ideal generated by the elements 
$\lambda_j:=\sigma_j(x_1^2, x_2^2,\ldots, x_p^2), 1\le j<p$ and $\sigma_p$.  We take $\mathbb{K}$ to be $\mathbb{Q}$.

We have 
$c(SO(2n)/U(n))=\prod_{1\le i<j\le p} (1+x_i+x_j)$ from which we obtain the following formula for the 
total Pontrjagin class:
\[1-p_1+p_2-\cdots =\prod_{1\le i<j\le p}(1+x_i+x_j)\prod_{1\le i<j\le p}(1-x_i-x_j),\]
equivalently, 
\[p(SO(2n)/U(n))=\prod_{1\le i<j\le p}(1+(x_i+x_j)^2).\]
Therefore $p_1=\sum_{1\le i<j\le p} (x_i+x_j)^2=(p-1)\sum_{1\le i\le p}x_i^2+2\sum_{1\le i<j\le p}x_ix_j
=(n-1)\lambda_1+2\sigma_2 =2\sigma_2
=\sigma_1^2$, since $\lambda_1\in I$ and $\sigma_1^2=\lambda_1+2\sigma_2$. 
Since $SO(2n)/U(n)$ is K\"ahler, and since $H^2(SO(2n)/U(n);\mathbb{Q})= \mathbb{Q}\sigma_1$, we have 
$\sigma_1^{d}\ne 0$ where $d=\dim_\mathbb{C} SO(2p)/U(p)=p(p-1)/2$.  Thus we 
see that $p_1^{d/2}[SO(2p)/U(p)]\ne 0$ when $d$ is even.  

An entirely analogous computation shows that $p_1^{d/2}[Sp(p)/U(p)]\ne 0$ when $\dim_\mathbb{C}Sp(p)/U(p)=
n(n+1)/2=:d$ is even, using the formula for the total Chern class of $Sp(p)/U(p)$ as given in \cite[\S 16.4]{bh}.
In fact, in this case the computations can be carried out in the integral cohomology ring. 

To summarise we have proved, in view of Proposition \ref{orduality}, the following.

\begin{theorem}
An irreducible hermitian symmetric domain $G/K$ (or its simply connected compact dual $U/K$)  
admits an orientation reversing isometry if and only if its complex dimension is odd. \hfill $\Box$ 
\end{theorem}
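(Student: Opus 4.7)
The plan is to combine Proposition~\ref{orduality} with a case-by-case verification across the six families of simply connected compact irreducible Hermitian symmetric spaces $U/K$: AIII, DIII, BDI(rank~$2$), CI, EIII, and EVII. By Proposition~\ref{orduality} it suffices to decide the question for $U/K$.

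For the ``if'' direction, if $d := \dim_\mathbb{C} U/K$ is odd, the automorphism $\sigma_u$ of $U$ that interchanges the two invariant complex structures on $U/K$ acts on $T_o(U/K) = \mathfrak{p}_*$ as complex conjugation on $\mathbb{C}^d$, whose real determinant is $(-1)^d = -1$; hence $\sigma_u$ is an orientation reversing isometry. This in particular handles EVII, for which $d=27$.

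For the ``only if'' direction, when $d$ is even I would show that some Pontrjagin number of $U/K$ is non-zero; this rules out any orientation reversing self-diffeomorphism (not merely isometry), since $[U/K]$ would then represent an element of infinite order in $\Omega_*$. For AIII, BDI(rank~$2$), and EIII, the signature---a Pontrjagin number via the Hirzebruch signature theorem---is non-zero by the formulas $\binom{\lfloor(p+q)/2\rfloor}{\lfloor p/2\rfloor}$, $4\lfloor p/2\rfloor$, and $3$ recalled above. For DIII ($SO(2p)/U(p)$) and CI ($Sp(n)/U(n)$) the signature can vanish, so instead one computes $p_1^{d/2}$ directly using the Borel--Hirzebruch formula for the total Chern class, reduces modulo the ideal $I$ defining $H^*(U/K;\mathbb{Q})$ to obtain $p_1 = \sigma_1^2$, and then invokes K\"ahlerness of $U/K$ together with $H^2(U/K;\mathbb{Q}) = \mathbb{Q}\sigma_1$ to conclude $\sigma_1^d \neq 0$, so $p_1^{d/2}[U/K] \neq 0$.

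The main obstacle is the DIII/CI case, where the signature alone is inadequate. The essential step is the algebraic manipulation in $H^*(U/K;\mathbb{Q})$: starting from $c(SO(2p)/U(p)) = \prod_{i<j}(1+x_i+x_j)$, rewriting the total Pontrjagin class as $\prod_{i<j}(1+(x_i+x_j)^2)$, expanding, and using the relations $\sigma_j(x_1^2,\dots,x_p^2) \in I$ together with $\sigma_1^2 = \lambda_1 + 2\sigma_2$ to collapse $p_1$ to $\sigma_1^2$. Once this reduction is in place, non-vanishing of $p_1^{d/2}$ on the fundamental class is immediate from K\"ahlerness, and the analogous argument using the Chern class formula for $Sp(n)/U(n)$ settles CI.
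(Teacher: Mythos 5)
Your proposal is correct and follows the paper's own argument essentially step for step: the odd-dimensional case via the conjugation automorphism $\sigma_u$ acting as complex conjugation on $\mathfrak{p}_*$, and the even-dimensional case via a non-vanishing Pontrjagin number (the known signatures for AIII, BDI of rank $2$, and EIII, and the reduction $p_1=\sigma_1^2$ plus K\"ahlerness for DIII and CI), all combined with Proposition~\ref{orduality}. There is nothing to add.
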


\subsection{Oriented Grassmann manifolds}  \label{ogm}
The oriented Grassmann manifold $\wt{G}_{m+n,n}=SO(m+n)/SO(m)~\times SO(n)$ is dual to 
$SO_0(m,n)/SO(m)\times SO(n)$.  We leave out the well-known case of sphere, $\min \{m,n\}=1$, and also $\wt{G}_{4,2}\cong S^2\times S^2$, 
by assuming $m,n>1$ and $m+n>4$.  When the dimension $mn$ is odd, the symmetric space 
$\wt{G}_{m+n,n}$ admits an orientation reversing isometry.  So assume that $mn$ is even.

Shanahan \cite{shanahan} has shown that the signature of $\wt{G}_{m+n,n}$ equals ${\lfloor (m+n)/4\rfloor}\choose {\lfloor n/4\rfloor}$ when both $m,n$ are even, $mn\equiv 0\mod 8$, and is zero otherwise. 
 When $m\equiv n\equiv 2\mod 4$ and $m\ne n$, it was shown that $p_1^{mn/4}[\wt{G}_{m+n,n}]\ne 0$ in the proof of 
 \cite[Theorem 3.2]{sv}. Consequently, $\wt{G}_{m+n,n}$ does not admit an orientation reversing 
 diffeomorphism in these cases. 
 
It remains to consider the cases $\wt{G}_{m+n,n}=U/K$, $U=SO(m+n), K=SO(m)\times SO(n)$ where 
at least one of the numbers $m,n$ is odd, or,  $m=n\equiv 2 \mod 4$. 
Suppose that $m$ is odd (the case $n$ odd being analogous). Consider the isometry $\sigma_u:\wt{G}_{m+n,n}\to \wt{G}_{m+n,n}$ defined as 
$\sigma_u(xK)=D^{-1}xDK$ where 
$D:=D_{m+n}=diag(1,\ldots, 1, -1)\in O(m+n)$.  Note that $D^{-1}KD=K$.  Also the differential of $\sigma_u$ at $o\in U/K$ is the linear map of $\mathfrak{p}_*=
\{\left(\begin{smallmatrix} 0 &B \\-B^t &0\end{smallmatrix} \right )\mid B\in M_{m\times n}(\mathbb{R})\}\cong M_{m\times n}(\mathbb{R})$ defined by $B\mapsto BD_n$.   Since $m$ is odd, we conclude that $\sigma_u$ is orientation 
reversing.

Finally, let $n\equiv 2 \mod 4$ and consider $\sigma_u:\wt{G}_{2n,n}\to \wt{G}_{2n,n}$ defined as $uK\mapsto J^{-1}uJK=JuJK$ is an isometry 
where $J\in SO(2n)$ the matrix $J:=\left(\begin{smallmatrix} 0 & I_n\\ I_n&0\end{smallmatrix} \right)$. 
Note that $J^{-1}KJ= K$.    

Then the differential of $\sigma_u$ at 
$o\in U/K$ is the linear isomorphism of $\mathfrak{p}_*\cong M_n(\mathbb{R})$ given by $B\mapsto -B^t$, which is orientation reversing if and only 
if ${n}\choose{2}$ is odd.  Since $n\equiv 2\mod 4$, we conclude that $\sigma_u$ is orientation reversing. 
(Cf. \cite[Lemma 3.5]{sv}.)

\subsection{Quaternionic Grassmann manifolds} \label{qgm} 
Next consider the quaternionic Grassmann manifold $\mathbb{H}G_{p+q,p}=Sp(p+q)/Sp(p)\times Sp(q)$.  
S. Mong \cite{mong}  has shown that the signature of $\mathbb{H}G_{p+q,p}$ is zero if and only if both $p,q$ are odd.  
We claim that the Pontrjagin number $p_1^{pq}[\mathbb{H}G_{p+q,p}]\ne 0$ when $p\ne q$.   When $p=1<q$, 
$\mathbb{H}G_{q+1,1}\cong \mathbb{H}P^q,$ the quaternionic projective space and the result is due to Borel and Hirzebruch \cite[\S15.5]{bh}. 
The general case can be reduced the case of quaternionic projective space.
To see this, we 
assume, without loss of generality, that  $q>p>1$ and use the 
natural `inclusion' of the quaternionic projective space 
$j: \mathbb{H}P^{q-p-1}\subset \mathbb{H}G_{p+q,p}$ induced by the obvious inclusion of 
$H:=\left(\begin{smallmatrix} I_{p-1} &0&0\\
0& Sp(q-p+2) &0\\
0& 0& I_{p-1}\\ \end{smallmatrix} \right)\cong Sp(q-p+2)$ into  
$Sp(p+q)$ so that $H\cap (Sp(p)\times Sp(q))=Sp(1)\times Sp(q-p+1)\subset Sp(q-p+2)$.  If we view $\mathbb{H}G_{p+q,p}$ as the space of all 
$q$-dimensional left $\mathbb{H}$-vector spaces in $\mathbb{H}^{p+q}$ and $\mathbb{H}P^{q-p+1}=\mathbb{H}G_{q-p+2,1}$ as the space of $1$-dimensional $\mathbb{H}$-vector spaces in $\mathbb{H}e_{p}+\cdots 
+\mathbb{H}e_{q+1}$  then $j(L)=L+\mathbb{H}e_{1}+\cdots 
+\mathbb{H}e_{p-1}$ for $L\in \mathbb{H}P^{q-p+1}$. (As usual, $\mathbb{H}$ stands for the skew field of quaternions and 
$e_1,\ldots, e_{p+q},$ the standard basis for $\mathbb{H}^{p+q}$.)
Then the normal bundle to the imbedding $j$ is trivial, by using, for example, the description of the tangent bundle of $\mathbb{H}G_{p+q,p}$  due to Lam \cite{lam}.
So $j^*(p_1(\mathbb{H}G_{p+q,p}))=p_1(\mathbb{H}P^{q-p+1})\ne 0$ since $q-p+1\ge 2$. This shows that $p_1(\mathbb{H}G_{p+q,p})\ne 0$.   
Observing that the integral cohomology rings of $\mathbb{C}G_{p+q,q}$ and $\mathbb{H}G_{p+q,p}$ are isomorphic by an isomorphism that doubles the degree and using the fact that  $c_1^{pq}\ne 0$ where 
$c_1\in H^2(\mathbb{C}G_{p+q,p};\mathbb{Z})\cong \mathbb{Z}$ is a generator, we see that $p_1^{pq}\ne 0$ in $H^{4pq}(\mathbb{H}G_{p+q,p};\mathbb{Z})$.
Thus the Pontrjagin number $p_1^{pq}[\mathbb{H}G_{p+q,p}]\ne 0$. (See also \cite[Theorems 3.2(iii) and 3.3(ii)]{sv}.)   Hence when $p\ne q$, $\mathbb{H}G_{p+q,p}$ does not admit an orientation 
reversing diffeomorphism. 

It remains to consider the case $p=q\equiv 1$ mod $2$.  In this case 
$\mathbb{H}G_{2p,p}$ admits an orientation reversing isometry as we shall now show.   
For this purpose, we shall use the description 
$\frak{sp}(2p)=\{\left(\begin{smallmatrix} A & Z \\-{}^t\bar{Z} & B\end{smallmatrix}\right)\mid 
Z=\left(\begin{smallmatrix} Z_1 &Z_2\\-\bar{Z}_2& \bar{Z}_1\end{smallmatrix}\right),
Z_1,Z_2\in M_{p}(\mathbb{C}), A, B\in \mathfrak{sp}(p)\}$, where $\mathfrak{sp}(p)=\{\left(\begin{smallmatrix} X & Y\\-\bar{Y} & \bar{X}\end{smallmatrix}\right)
\mid X\in \mathfrak{u}(p), {}^tY=Y\in M_p(\mathbb{C})\}$. Thus $\mathfrak{p}_*$ consists of all matrices of the form 
$\left(\begin{smallmatrix} 0&Z\\-{}^t\bar{Z}&0\end{smallmatrix}\right)\in \mathfrak{sp}(2p)$. 
Conjugation by $J=\left(\begin{smallmatrix} 0& I \\I&0\end{smallmatrix}\right)$ is an automorphism of $\mathfrak{sp}(2p)$ which maps 
$\left(\begin{smallmatrix} A&Z\\-{}^t\bar{Z} &B\end{smallmatrix}\right)$
to $ \left(\begin{smallmatrix} B&-{}^t\bar{Z}\\ Z &A\end{smallmatrix} \right)$.  Evidently  it stabilizes $\mathfrak{k}=\mathfrak{sp}(p)\times \mathfrak{sp}(p)$ and, since $p$ is odd, reverses the orientation on $\mathfrak{p}_*$.

\subsection{Other symmetric spaces of classical type}
Consider the space $SU(n)/SO(n), n>2$ (type AI).  Note that $\frak{p}_* =iSym^0_n(\mathbb{R})$, consisting of 
trace $0$ symmetric matrices with purely imaginary entries.  Thus $\frak{p}_*\cong Sym^0(\mathbb{R})$.    
Conjugation by $D:=diag(1,\ldots, 1, -1)$ yields an isometry $\sigma_u$ 
defined as $xK \mapsto DxDK$ which induces on $\frak{p}_*$ the map $X\mapsto DXD$.   It is readily 
seen to be orientation reversing if and only if $n$ is even.  When $\dim SU(n)/SO(n)=(n+2)(n-1)/2$ is odd, i.e., 
when $n\equiv 0, 3\mod 4$, the involutive isometry $gK\to {}^tg^{-1}K$ is orientation reversing.    
It remains to consider the case $n\equiv 1\mod 4$. 
When $n\equiv 1\mod 2$,  the outer automorphism group of $SL(n,\mathbb{R})$ 
is generated by the Cartan involution $X\to {}^tX^{-1}$; see \cite[p. 132-133]{murakami}.  
It follows that $SL(n,\mathbb{R})/SO(n)$ and $SU(n)/SO(n)$ 
does not admit any orientation reversing isometry if $n\equiv 1 \mod 4$. 

Next consider the symmetric space $SU(2n)/Sp(n)$ (type AII), which is dual to $SU^*(2n)/Sp(n)$ where $SU^*(2n)\subset SL(2n,\mathbb{C})$ 
consists of matrices $Z$ which commute with the transformation ${}^t(z_1,\ldots, z_{2n})= {}^t(\bar{z}_{n+1},\dots, \bar{z}_{2n}, -\bar{z}_1, \ldots, -\bar{z}_n), z\mapsto {}^t(z_1, \ldots, z_{2n})\in \mathbb{C}^{2n}$.  
Again the outer automorphism 
group of $SU^*(2n)$ is generated by the Cartan involution $X\to {}^tX^{-1}$ by the work of Murakami \cite[p. 131-132]{murakami}.  Hence $SU^*(2n)/Sp(n) $ (equivalently $SU(2n)/Sp(n)$) admits an orientation reversing 
isometry if and only if its dimension $(n-1)(2n+1)$ is odd, that is, if and only if $n$ is even.

\subsection{Symmetric spaces of types G and F II} 
Borel and Hirzebruch \cite[\S18,19]{bh} have computed the Pontrjagin numbers of $G_2/SO(4)$ (type G) and of the Cayley plane $F_4/Spin(9)$  (type F II).   In particular they showed that $p_2[G_2/SO(4)]\ne 0$ and $p_4[F_4/Spin(9)]\ne 0$. 
Hence neither $G_2/SO(4)$ nor 
$F_4/Spin(9)$ admit any orientation reversing diffeomorphism.   
Also, by the formula of G. Hirsch, the Poincar\'e polynomials of $G_2/SO(4)$ and $F_4/Spin(9)$ are $1+t^4+t^8$ and $1+t^8+t^{16}$, it follows that their signatures are  $1$ (with suitable orientations).

\subsection{The table of results}\label{rptable}
We summarise the above results for type III symmetric spaces in  Table 1: 
Here OR indicates the existence of an orientation reversing 
isometry and OP indicates that every isometry is orientation 
preserving.

\[
\begin{tabular}{ |c|c|c|c|}
\hline
Type & $U/K$  &  parameter & OP/OR \\
\hline
A I & $SU(n)/SO(n)$ & $n\equiv  0, 2,3 \mod 4$ & OR \\  
     &                                           &  $n\equiv 1 \mod 4$ &  OP\\
\hline
A II &  $SU(2n)/Sp(n)$ & $2|n$ & OR\\
& &  $2|(n-1)$ & OP\\
\hline
A III  & $\mathbb{C}G_{p+q,p}$ & $2|pq$ &OP\\
&&$pq \equiv 1\mod 2$ & OR\\
\hline 
BD I&$ \wt{G}_{p+q,p}$ & $2|p, 2|q, 8|pq$ & OP\\
&& otherwise & OR\\
\hline 
D III& $SO(2n)/U(n)$ & $n\equiv 2,3 \mod 4$ & OR\\
&& $n\equiv 0,1 \mod 4$ & OP\\
\hline 
C I& $Sp(n)/U(n)$ & $n\equiv 1,2 \mod 4$& OR\\
&& $n\equiv 0,3\mod 4$ & OP\\
\hline
C II & $ \mathbb{H}G_{p,q}$& $2|pq$ or $p\ne q$ & OP \\
&& $p=q\equiv 1\mod 2$ & OR\\
\hline 
E III&$\frac{E_6}{Spin(10)\times U(1)} $& -- & OP\\
\hline
E VII&  $\frac{E_7}{E_6\times U(1)} $& -- & OR\\
\hline
F II &  $F_4/SO(9)$ & -- & OP\\
\hline 
G & $G_2/SO(4)$ & -- & OP\\

\hline 
\end{tabular}
\]
\begin{center}
Table 1. Results for irreducible symmetric spaces of Type III. 
\end{center}

\section{Maps to negatively curved manifolds} \label{rankone}

We conclude this paper with the following observations concerning maps from/to a rank-$1$ locally symmetric space.
Indeed most of our observations hold in the more general context of complete  
negatively curved manifolds. 

\subsection{Self-maps}  \label{self}
Let $M$ be a complete negatively curved manifold with finite volume and having dimension at least $3$.  
Then it is known that $Out(\pi_1(M))$ 
is finite and that $\pi_1(M)$ is co-Hopfian.  See \cite[\S 5]{belegradek}.  When $M$ is compact, the finiteness of 
the outer automorphism group also follows from \cite[\S 5.4A]{gromov87}. Thus the group of self-homotopy equivalences of $M$ is finite.  If $\pi_1(M)$ is also residually finite, then any self-map of non-zero degree is a homotopy 
equivalence by Lemma \ref{finiteindex}(ii).  

Let $f:M_1\to M_2$ be a (proper) continuous map between 
two complete negatively curved manifolds of finite volume and let $\deg(f)\ne 0$.  Then $||M_1||\ge |\deg(f)|.||M_2||$ as observed by 
Gromov \cite[p. 8]{gromov}.  Since $||M_1||>0$ is finite, we see that $|\deg(f)|$ is bounded 
above by $||M_1||/||M_2||$.   If there exists also a continuous map $g:M_2\to M_1$ with $\deg(g)\ne 0$, then 
$|\deg(g)|\le ||M_2||/||M_1||$ and it follows that $||M_1||=||M_2||$, $|\deg(f)|=1=|\deg(g)|$.  
Suppose that $M_1$ is compact.  Then so is $M_2$ and 
by a result of Sela \cite{sela} we have that the $\pi_1(M_i)$ are Hopfian.  Hence $g\circ f$  (resp. $f\circ g$) induces an automorphism of $\pi_1(M_1)$ (resp. $\pi_1(M_2)$) and so $f$ and $g$ are homotopy equivalences.  
 If $\dim(M_1)\ge 5$, then by the topological rigidity theorem 
of Farrell and Jones \cite{fj}, we deduce that $M_1$ and $M_2$ are homeomorphic.  This has already been 
observed under the hypothesis that $|\deg(f)|=1$ by Sela \cite{sela}.  \\

\subsection{Maps from manifolds with positive first Betti number.} \label{firstbetti}

The first Betti number conjecture is the assertion that every hyperbolic $3$-manifold $X$ having finite volume 
admits a finite cover whose first Betti number is positive.   This implies the virtual Haken conjecture 
and is implied by the virtually fibred conjecture, which asserts that $X$ admits a finite cover which fibres over the circle.   The virtual fibred conjecture for closed hyperbolic $3$-manifolds has been established recently by Agol \cite{agol}. 

More generally, it has been conjectured that 
any locally symmetric space $X=\Gamma\backslash G/K,$ where $G$ is a connected semisimple (non-compact) linear Lie group of real rank $1$, admits a finite cover whose first Betti number is non-zero.  In group theoretic terms, it translates into 
the assertion that the lattice $\Gamma$ admits a finite index subgroup $\Gamma_0$ whose abelianization is 
infinite.  Although this conjecture seems to be as yet unresolved,  
it is known to be true in most cases when $\Gamma$ is arithmetic.  See \cite{millson2}, \cite{borel2}, \cite{rv}, and \cite{v}.

We want to show that if $Y=\Lambda \backslash H/L$ is any locally symmetric space where $\Lambda$ is 
torsionless and $H$ is any connected semisimple linear Lie group 
and $X$ is any manifold with positive first Betti number, then the set $[X,Y]$ is infinite.  It is easily 
seen that, when $H$ is regarded as a subgroup of $GL(N,\mathbb{R})$ for some $N$, there exists a $\lambda$ 
in $\Lambda$ not all of whose eigenvalues are on the unit circle.  It follows that $\lambda^m$ and $\lambda^n$ 
are not conjugates in $GL(N,\mathbb{R})$ if $|m|\ne |n|$.  Hence $\Lambda$ has 
infinitely many distinct conjugacy classes.   In fact, it follows from the work of Brauer \cite{brauer} (cf. \cite{pyber}) that any infinite, residually finite group has infinitely many conjugacy classes.  In particular, any finitely generated infinite subgroup of $GL(N,\mathbb{R})$ has infinitely many conjugacy classes.  The following very general result shows that 
$[X,Y]$ is infinite.

\begin{proposition} Let $X$ be any connected CW complex with positive first Betti number. 
Let $Y$ be an Eilenberg-MacLane complex $K(\Lambda,1)$ where $\Lambda$ is any group that has infinitely many  
conjugacy classes.   
 Then  the set $[X,Y]$ of (free) homotopy classes of maps from $X$ to $Y$ is infinite.
\end{proposition}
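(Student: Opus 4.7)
The plan is to exploit the standard bijection between $[X,Y]=[X,K(\Lambda,1)]$ and the set of $\Lambda$-conjugacy classes of homomorphisms $\pi_1(X)\to \Lambda$. Since $Y$ is aspherical and $X$ is a CW complex, pointed homotopy classes of maps $X\to Y$ correspond bijectively to homomorphisms $\pi_1(X)\to \Lambda$; passing to free homotopy classes amounts to quotienting $\hom(\pi_1(X),\Lambda)$ by the action of $\Lambda$ on itself by conjugation in the target. So it suffices to exhibit infinitely many pairwise non-conjugate homomorphisms $\pi_1(X)\to \Lambda$.

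To produce them, I would first use the hypothesis $b_1(X)>0$: the abelianization $\pi_1(X)^{ab}=H_1(X;\bz)$ has positive rank, so it admits a surjection onto $\bz$. Composing with the quotient $\pi_1(X)\to \pi_1(X)^{ab}$ yields a surjective homomorphism $\pi\colon \pi_1(X)\to \bz$. For each $\lambda\in \Lambda$, set $\phi_\lambda:=\iota_\lambda\circ \pi$, where $\iota_\lambda\colon \bz\to \Lambda$ sends $1$ to $\lambda$.

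The key step is the claim that $\phi_\lambda$ and $\phi_\mu$ are $\Lambda$-conjugate if and only if $\lambda$ and $\mu$ are conjugate in $\Lambda$. One direction is immediate, since $g\phi_\lambda g^{-1}=\phi_{g\lambda g^{-1}}$. Conversely, if $\phi_\mu=g\phi_\lambda g^{-1}$, choose $\gamma_0\in \pi_1(X)$ with $\pi(\gamma_0)=1$, which is possible by surjectivity of $\pi$; evaluating both sides at $\gamma_0$ gives $\mu=g\lambda g^{-1}$. Since by hypothesis $\Lambda$ has infinitely many conjugacy classes, the family $\{\phi_\lambda\}_{\lambda\in \Lambda}$ contains infinitely many pairwise non-conjugate homomorphisms, producing infinitely many distinct elements of $[X,Y]$.

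I do not anticipate a serious obstacle. The only slightly delicate point is the identification of $[X,K(\Lambda,1)]$ with conjugacy classes of homomorphisms, which is a classical consequence of obstruction theory and relies crucially on $Y$ being aspherical and $X$ being a CW complex; both hypotheses are supplied by the statement. Everything else is a direct unpacking of the definition of $\phi_\lambda$.
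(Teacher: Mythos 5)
Your proposal is correct and follows essentially the same route as the paper: both produce the family of homomorphisms $\pi_1(X)\to\mathbb{Z}\to\Lambda$ sending a fixed preimage $\gamma_0$ of $1$ to $\lambda$, and distinguish them up to free homotopy by evaluating at $\gamma_0$ to recover the conjugacy class of $\lambda$. The only cosmetic difference is that you invoke the full bijection $[X,K(\Lambda,1)]\cong \hom(\pi_1(X),\Lambda)/\textrm{conj}$, whereas the paper uses only the statement that freely homotopic maps induce homomorphisms agreeing up to inner automorphisms of source and target; both suffice here since precomposition with an inner automorphism is killed by $\alpha$ factoring through the abelianization.
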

\begin{proof}  Let $\Gamma=\pi_1(X)$.  
Since the spaces involved are CW complexes, and since $Y$ is a $K(\Lambda,1)$-space,  
any homomorphism $\Gamma\to \Lambda$ is induced by a continuous map $X\to Y$.  If two maps $f, g:X\to Y$ are 
(freely) homotopic, then there exist inner-automorphisms $\iota_a\in Aut(\Gamma)$ and $\iota_b\in Aut(\Lambda)$ such 
that $g_*=\iota_b\circ f_*\circ \iota_a:\Gamma\to \Lambda$.   
Let $\alpha:\Gamma\to \mathbb{Z}$ be a non-zero element of $H^1(X;\mathbb{Z})$. We may assume that $\alpha$ is surjective; choose a $\gamma_0\in \Gamma$ such that $\alpha(\gamma_0)=1$.   For any 
$\lambda\in \Lambda$, let $f_\lambda:X\to Y$ be any continuous map which induces the homomorphism 
$\theta_\lambda:=\epsilon_\lambda \circ \alpha$ where $\epsilon_\lambda:\mathbb{Z}\to \Lambda$ is defined by 
$1\mapsto \lambda$.   Suppose that $f_\lambda$ and $f_\mu$ are (freely) homotopic.   Then there exist $a\in \Gamma$ and $b\in \Lambda$, such that $\theta_\mu=\iota_b\circ \theta_\lambda\circ \iota_a$. Evaluating both sides at $\gamma_0$, we obtain $\mu=b\lambda b^{-1}$; thus $\lambda$ and $\mu$ are conjugates in $\Lambda$.  Since $\Lambda$ has infinitely many conjugacy classes it follows that 
$[X,Y]$ is infinite.    
\end{proof}

\end{document}